\newcommand{\Spec}{\mathrm{Spec}}
\newcommand{\id}{\operatorname{Id}}
\newcommand{\Vol}{\operatorname{Vol}}
\newcommand{\Ric}{\operatorname{Ric}}
\newcommand{\dist}{\operatorname{dist}}
\newcommand{\Area}{\operatorname{Area}}
\renewcommand{\Vol}{\operatorname{Vol}}
\newcommand{\Diff}{\operatorname{Diff}}
\newcommand{\Emb}{\operatorname{Emb}}
\newcommand{\imorse}{i_\text{\rm Morse}}
\newcommand{\csch}{\operatorname{csch}}
\newcommand{\sech}{\operatorname{sech}}
\newcommand{\Ss}{\mathds S}
\newcommand{\Hr}{\mathds H}
\newcommand{\Kr}{\mathds K}
\newcommand{\N}{\mathds N}
\newcommand{\Z}{\mathds Z}
\newcommand{\R}{\mathds R}
\newcommand{\C}{\mathds C}
\newcommand{\Ca}{\mathds C\mathrm a}
\newcommand{\GL}{\mathsf{GL}}
\newcommand{\SU}{\mathsf{SU}}
\newcommand{\U}{\mathsf{U}}
\newcommand{\Ut}{\mathsf{U}}
\newcommand{\Sp}{\mathsf{Sp}}
\newcommand{\Spin}{\mathsf{Spin}}
\newcommand{\T}{\mathsf{T}}
\newcommand{\G}{\mathsf{G}}
\newcommand{\K}{\mathsf{K}}
\renewcommand{\H}{\mathsf H}
\newcommand{\F}{\mathsf F}
\newcommand{\LL}{\mathsf{L}}
\newcommand{\so}{\mathfrak{so}}
\newcommand{\spin}{\mathfrak{spin}}
\DeclareMathOperator{\Ad}{Ad}
\DeclareMathOperator{\spec}{Spec}
\DeclareMathOperator{\Hom}{Hom}
\DeclareMathOperator{\tr}{tr}
\DeclareMathOperator{\diag}{diag}
\newcommand{\op}{\operatorname}
\newcommand{\Id}{\id}
\newcommand{\mi}{\mathrm{i}}
\DeclareMathOperator{\Cas}{Cas}
\newcommand{\inner}[2]{\langle {#1},{#2}\rangle }
\newcommand{\innerdots}{\langle {\cdot},{\cdot}\rangle }
\newcommand{\g}{\mathrm g}
\newcommand{\x}{\mathbf x}
\newcommand{\ee}{\varepsilon}
\newcommand{\re}{\op{Re}}
\newcommand{\fa}{\mathfrak a}
\newcommand{\ff}{\mathfrak f}
\newcommand{\fg}{\mathfrak g}
\newcommand{\fh}{\mathfrak h}
\newcommand{\fk}{\mathfrak k}
\newcommand{\fl}{\mathfrak l}
\newcommand{\fm}{\mathfrak m}
\newcommand{\fp}{\mathfrak p}
\newcommand{\fq}{\mathfrak q}
\newcommand{\ft}{\mathfrak t}
\newtheorem{theorem}{Theorem}[]
\newtheorem{lemma}[theorem]{Lemma}
\newtheorem{proposition}[theorem]{Proposition}
\newtheorem{corollary}[theorem]{Corollary}
\newtheorem{mainthm}{\sc Theorem}
\theoremstyle{definition}
\newtheorem{definition}[theorem]{Definition}
\newtheorem{notation}[theorem]{Notation}
\theoremstyle{remark}
\newtheorem{remark}[theorem]{Remark}
\title[Full spectrum of distance spheres in symmetric spaces of rank one]{Full Laplace spectrum of distance spheres in symmetric spaces of rank one}
\subjclass{58J50, 53C35, 53C30, 58J55, 53A10, 22E46, 35J20}
\author[R.~G.~Bettiol]{Renato G.~Bettiol}
\address{City University of New York (Lehman College) \newline
\indent Department of Mathematics  \newline
\indent 250 Bedford Park~Blvd W\newline
\indent Bronx, NY, 10468, USA }
\email{r.bettiol@lehman.cuny.edu}
\author[E.~A.~Lauret]{Emilio~A.~Lauret}
\address{
	Universidad Nacional del Sur (UNS) - CONICET\newline
	\indent Instituto de Matem\'atica de Bah\'ia Blanca (INMABB) \newline
	\indent Departamento de Matem\'atica\newline
	\indent Av.\ Alem 1255, Bah\'ia Blanca B8000CPB, Argentina}
\email{emilio.lauret@uns.edu.ar}
\author[P.~Piccione]{Paolo Piccione}
\address{Universidade de S\~ao Paulo \newline
\indent Departamento de Matem\'atica \newline
\indent Rua do Mat\~ao, 1010 \newline
\indent S\~ao Paulo, SP, 05508-090, Brazil}
\email{piccione@ime.usp.br}
\numberwithin{equation}{section}
\numberwithin{theorem}{section}
\date{\today}
\thanks{The first-named author is supported by the National Science Foundation (DMS-1904342). The second-named author is supported by FonCyT (BID-PICT 2018-02073) and the Alexander von Humboldt Fountation (return fellowship). The third-named author is supported by Fapesp (2016/23746-6 and 2019/09045-3).} 
\begin{document}

\begin{abstract}
We use Lie-theoretic methods to explicitly compute the full spectrum of the Laplace--Beltrami operator on homogeneous spheres which occur as geodesic distance spheres in (compact or noncompact) symmetric spaces of rank one, and provide a single unified formula for all cases.
As an application, we find all resonant radii for distance spheres in the compact case, i.e., radii where there is bifurcation of embedded constant mean curvature spheres, and show that distance spheres are stable and locally rigid in the noncompact case.
\end{abstract}

\maketitle

\section{Introduction}\label{sec:introduction}

The family of (simply-connected) symmetric spaces of rank one consists of 
spheres and projective spaces $\Ss^n$, $\C P^n$, $\Hr P^n$, $\Ca P^2$, together with their noncompact duals, the hyperbolic spaces $H^n$, $\C H^n$, $\Hr H^n$, $\Ca H^2$. 
As Riemannian manifolds, these are \emph{two-point homogeneous spaces}, that is, any two pairs of points at the same distance can be mapped to one another by an isometry. 
In particular, their distance spheres
\begin{equation*}
S(r)=\big\{x\in M: \dist(x_0,x)=r \big\},
\end{equation*}
are homogeneous spaces themselves, and two distance spheres are isometric if and only if they have the same radius, regardless of their centers.
These homogeneous spheres are the main object of study in this paper;
in which we shall use Lie theory to explicitly compute their entire Laplace spectrum, and 
determine their stability (or lack thereof) as constant mean curvature hypersurfaces.

While distance spheres $S(r)$ in $\Ss^n$ and $H^n$ have constant curvature, i.e., are isometric to round spheres, just like in $\R^n$, this is no longer the case in projective and hyperbolic spaces. 
Geometrically, $S(r)\subset M$ are obtained by rescaling the unit round metric in the vertical direction(s) of the corresponding Hopf bundle by $t>0$:
\begin{equation}\label{eq:hopfbundles}
\begin{aligned}
\Ss^1_t\longrightarrow & \;\big(\Ss^{2n+1},\mathbf g(t)\big)\longrightarrow \C P^n, & \quad \text{ if } & M = \C P^{n+1} \text{ or } \,\C H^{n+1};\\
\Ss^3_t\longrightarrow & \;\big(\Ss^{4n+3},\mathbf h(t)\big)\longrightarrow\Hr P^n, & \quad \text{ if } & M = \Hr P^{n+1} \text{ or } \,\Hr H^{n+1}; \\
\Ss^7_t\longrightarrow & \;\big(\Ss^{15},\mathbf k(t)\big)\longrightarrow\Ss^8_{1/2}, & \quad \text{ if } & M = \Ca P^{2} \text{ or } \, \Ca H^2;
\end{aligned}
\end{equation}
where $\Ss^\ell_t$ denotes the $\ell$-dimensional sphere of constant curvature $\sec=1/t^2$, and then globally rescaling all directions by $\alpha>0$. With the convention (used throughout this paper) that the above projective and hyperbolic spaces with their canonical metrics have sectional curvatures $1\leq\sec_M\leq4$ and $-4\leq \sec_M\leq -1$ respectively, the values of $t$ and $\alpha$ for $S(r)\subset M$ are related to its geodesic radius $r$ as follows:
\begin{equation}\label{eq:t&alpha}
\begin{aligned}
&t=\cos r &\text{ and }\;\; &\alpha=\sin r, & \;  &0<r<\pi/2, &\text{if }M\text{ is a projective space}; \\
&t=\cosh r &\text{ and }\;\; &\alpha=\sinh r, & \; &r>0, &\text{if }M\text{ is a hyperbolic space}. \\
\end{aligned}
\end{equation}
Note that, with these conventions, the above projective spaces have diameter $\pi/2$.
Of course, all $S(r)$ become asymptotically round as $r\searrow0$, that is, they converge (up to homothety by $\alpha$) to the \emph{unit} round metric, which corresponds to $t=1$ in each of the families $\mathbf g(t)$, $\mathbf h(t)$, and $\mathbf k(t)$. Furthermore, only the metrics with either $t<1$ or $t>1$ appear (up to homotheties) as distance spheres $S(r)\subset M$, according to whether $M$ is projective or hyperbolic. 

It is convenient to refer to the Riemannian submersions \eqref{eq:hopfbundles} collectively as
\begin{equation}\label{eq:universal-hopf}
\Ss^{2d-1}_t\longrightarrow \big(\Ss^{N-1},\g(t)\big)\longrightarrow \mathds K P^n,
\end{equation}
where $\mathds K\in \{\C,\Hr,\Ca\}$, $d=\dim_\C \mathds K\in\{1,2,4\}$, $n\geq1$, and $N=2d(n+1)=\dim M$ is the (real) dimension of the ambient space $\mathds K P^{n+1}$ or $\mathds K H^{n+1}$. Recall that if $\mathds K=\Ca$, i.e., $d=4$, only $n=1$ is possible due to the non-associativity of Cayley numbers \cite{baez,lackmann}, and \eqref{eq:universal-hopf} is \emph{not} a homogeneous fibration \cite{gluck-ziller,guijarro}. 

Since the fibers of \eqref{eq:universal-hopf} are totally geodesic, the projection map ``commutes'' the Laplace--Beltrami operators of total space and base. In particular, lifting a Laplace eigenfunction of $\mathds K P^n$ produces a Laplace eigenfunction of $\big(\Ss^{N-1},\g(t)\big)$, with the same eigenvalue. 
Such eigenvalues are called \emph{basic}, and are independent of~$t$.
Although it has been known for a long time that all eigenvalues are sums of basic eigenvalues with certain Laplace eigenvalues of the fiber~\cite{Berard-BergeryBourguignon82,besson-bordoni}, determining exactly which sums of eigenvalues from $\Kr P^n$ and $\Ss^{2d-1}$ indeed appear in the spectrum of the total space can be somewhat impractical. 
We circumvent this with an alternative Lie-theoretic approach based on \cite{MutoUrakawa80}, recently used in \cite{Lauret-SpecSU(2),blp-firsteigenvalue} and expanded in Section~\ref{sec:homspectra} below, which yields our first main result:

\begin{mainthm}\label{mainthm:A}
The spectrum of the Laplace--Beltrami operator on the homogeneous sphere $\big(\Ss^{N-1},\g(t)\big)$, $N=2d(n+1)$, as in \eqref{eq:universal-hopf}, consists of the eigenvalues
\begin{equation}\label{eqthm:lambdapq-universal}
\lambda^{(p,q)}(t)=4p\big(p+q+d(n+1)-1\big)+2dnq+q(q+2d-2)\tfrac{1}{t^2}, \quad p,q\in\N_0,
\end{equation}
which are basic if $q=0$, and have multiplicity
\begin{equation}\label{eqthm:dpq-universal}
m_{p,q}=\frac{2p+q+d(n+1)-1}{d(n+1)-1}\frac{\binom{p+q+d(n+1)-2}{p+q}\binom{p+dn-1}{p}}{\binom{p+q+d-1}{p+q}}\,\chi(d,q),
\end{equation}
where $\chi(d,q)=\left(1+\frac{q}{d-1}\right)\frac{\Gamma(q+2d-2)}{\Gamma(q+1)\Gamma(2d-2)}$.
If different pairs $(p,q)$ give the same value $\lambda^{(p,q)}(t)$, the multiplicity of that eigenvalue is the sum of all the corresponding~$m_{p,q}$.
\end{mainthm}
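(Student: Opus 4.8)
The plan is to realize $\big(\Ss^{N-1},\g(t)\big)$ as a normal homogeneous space $\G/\K$ (for $\K = \U(n+1)$, $\Sp(n+1)$, or $\Spin(9)$ and $\G$ the corresponding isometry group of the Hopf bundle, e.g.\ $\U(n+1)\ltimes$ the fiber rotations, acting so that the metric $\g(t)$ is $\G$-invariant), and to apply the general machinery of Section~\ref{sec:homspectra}. Concretely, I would first record the isotropy representation and the decomposition $\fg = \fk \oplus \fp$, split $\fp = \fp_0 \oplus \fp_1$ into the horizontal and vertical parts of the Hopf fibration, and observe that $\g(t)$ corresponds to the inner product that is the round one on $\fp_0$ and $t^2$ times the round one on $\fp_1$. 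The Laplace--Beltrami operator on $\G$-invariant—or more to the point, on \emph{any}—functions is then, by Frobenius reciprocity / the Peter--Weyl theorem, diagonalized by the $\K$-fixed vectors inside each $\G$-irreducible $V_\pi$ appearing in $L^2(\Ss^{N-1})$, and on such a vector it acts by the ``Casimir'' scalar $\Cas_{\g(t)}(\pi)$, computed from the symbol of the metric: $-\tr\big(\pi_*|_{\fp_0}^2\big) - \tfrac{1}{t^2}\tr\big(\pi_*|_{\fp_1}^2\big)$ (as spelled out in Section~\ref{sec:homspectra}).

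Next I would identify exactly which $\pi\in\widehat{\G}$ contribute, i.e.\ which have a nonzero $\K$-fixed subspace, and with what multiplicity that fixed subspace occurs; this is the branching problem $\G\downarrow\K$. Here the pair $(\Ss^{N-1},\g(t))$ fibers over $\Kr P^n$ with fiber $\Ss^{2d-1}$, so the $\G$-representations are indexed by a pair $(p,q)$: roughly, $p$ is the ``base'' parameter (the $\Kr P^n$-harmonic degree, giving the basic eigenvalues) and $q$ the ``fiber'' parameter. For $\Kr=\C$ this is the classical decomposition of spherical harmonics on $\Ss^{2n+1}$ into $\U(n+1)$-types $\mathcal H^{p+q,p}$; for $\Kr=\Hr$ and $\Kr=\Ca$ one uses the analogous $\Sp(n+1)$- and $\Spin(9)$-decompositions (the $\Spin(9)$ case on $\Ss^{15}$ being the only subtlety because the Hopf fibration is non-homogeneous, so one must instead use the transitive action of $\Spin(9)$ on $\Ss^{15}$ directly and its branching to $\Spin(7)$). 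The dimension of $V_{(p,q)}$, divided where needed by the multiplicity of the trivial $\K$-type, yields the multiplicity formula $m_{p,q}$; the three Weyl-dimension computations assemble into the single closed form \eqref{eqthm:dpq-universal} with the factor $\chi(d,q)$ accounting for the fiber-$\Ss^{2d-1}$ harmonics of degree $q$ (note $\chi(1,q)=1$, matching the $1$-dimensional fiber).

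Finally I would compute the Casimir scalar explicitly. The full Casimir eigenvalue $\Cas_\g(\pi_{(p,q)})$ with respect to the \emph{bi-invariant} (round, $t=1$) metric is $\langle \lambda_{(p,q)} + 2\rho, \lambda_{(p,q)}\rangle$ from the highest weight, giving the $t=1$ value $4p(p+q+d(n+1)-1) + 2dnq + q(q+2d-2)$; the vertical part contributes precisely the fiber Casimir $q(q+2d-2)$ (the eigenvalue of the degree-$q$ Laplacian on the unit $\Ss^{2d-1}$, since $\fp_1\cong\mathfrak{so}(2d)/\mathfrak{so}(2d-1)$-type and the fiber is totally geodesic). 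Rescaling the vertical inner product by $t^2$ replaces that contribution by $\tfrac1{t^2}q(q+2d-2)$, leaving the rest untouched, which is exactly \eqref{eqthm:lambdapq-universal}; the case $q=0$ has no vertical dependence, so those eigenvalues are basic, as claimed. The last sentence of the theorem is automatic, since $L^2$ decomposes as the orthogonal sum over $(p,q)$ and eigenspaces for a common value of $\lambda^{(p,q)}(t)$ simply add.

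\medskip

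The main obstacle I expect is the bookkeeping that unifies the three cases $\Kr\in\{\C,\Hr,\Ca\}$ into one formula: the $\Spin(9)/\Spin(7)$ branching on $\Ss^{15}$ does not literally come from a homogeneous Hopf fibration, so one has to verify by hand that the representation-theoretic parameter playing the role of ``$q$'' there produces the same metric symbol $(1,\tfrac1{t^2})$ split and the same Casimir bookkeeping as in the classical $\C$ and $\Hr$ cases — and then check that the three separately-derived Weyl dimension formulas really do collapse to \eqref{eqthm:dpq-universal}. The rest is a (lengthy but routine) highest-weight and binomial-coefficient computation.
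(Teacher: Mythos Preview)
Your strategy is exactly the paper's: treat each $\Kr\in\{\C,\Hr,\Ca\}$ separately via the Lie-theoretic method of Section~\ref{sec:homspectra} (Peter--Weyl, Casimir decomposition, branching rules, Freudenthal's formula), then verify that the three answers collapse into the unified formulae \eqref{eqthm:lambdapq-universal} and \eqref{eqthm:dpq-universal}.

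That said, several details in your sketch need correction before it would go through. Your group labels are off: one writes $\Ss^{N-1}=\G/\H$ with $\G\in\{\U(n+1),\Sp(n+1),\Spin(9)\}$ the transitive group and $\H\in\{\U(n),\Sp(n),\Spin(7)\}$ the point isotropy; the \emph{intermediate} subgroup $\K$ (with $\G/\K\cong\Kr P^n$) is $\U(n)\U(1)$, $\Sp(n)\Sp(1)$, or $\Spin(8)$, and no semidirect product with extra ``fiber rotations'' is needed since those already lie inside $\G$. Your claim that $\chi(1,q)=1$ is also wrong: in fact $\chi(1,q)=2$ for $q\geq1$, reflecting that in the $\C$ case two inequivalent spherical representations $\pi_{p,p+q}$ and $\pi_{p+q,p}$ of $\U(n+1)$ yield the same eigenvalue --- this is precisely the factor $(2-\delta_{q0})$ in Proposition~\ref{thmSU:spec}, and it does not come from ``fiber harmonics'' as you suggest. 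Finally, the multiplicity is $\dim V_\pi$ \emph{multiplied} (not divided) by the branching multiplicities $[1_\H:\tau|_\H][\tau:\pi|_\K]$; here those factors are all equal to $1$, so $m_{p,q}$ is a pure Weyl-dimension computation, modulo the doubling just mentioned in the $d=1$ case.
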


We take the convention that $\chi(d,q)$ is extended by continuity to its removable singularity at $d=1$, i.e.,
\begin{equation*}
\chi(1,q)=\lim_{d\to1} \left(1+\frac{q}{d-1}\right)\frac{\Gamma(q+2d-2)}{\Gamma(q+1)\Gamma(2d-2)}=\begin{cases}
1 & \text{ if }q=0\\
2 & \text{ if }q\geq1,
\end{cases}
\end{equation*}
since $\Gamma(z)$ has a simple pole at $z=0$ of residue $1$, and $\Gamma(a)=(a-1)!$ for all $a\in\N$. Moreover, if $d\geq2$, note that $\chi(d,q)=\big(1+\frac{q}{d-1}\big)\binom{q+2d-3}{q}$ for all $q\in\N_0$. As usual, we agree that $\binom{a}{b}=0$ if $a<b$. 
Despite the convenient unified formulae \eqref{eqthm:lambdapq-universal} and \eqref{eqthm:dpq-universal}, the proof of Theorem~\ref{mainthm:A} is done analyzing each case $\Kr\in\{\C,\Hr,\Ca\}$ separately, and corresponding formulae can be found in Table~\ref{tab:eigenvalues}.

Note that setting $t=1$ in \eqref{eqthm:lambdapq-universal}, the eigenvalues $\lambda^{(p,q)}(1)=k(k+N-2)$, $k\in\N_0$, of the unit round sphere $\Ss^{N-1}$ are recovered, with $k=2p+q$. Moreover, \eqref{eqthm:dpq-universal} and combinatorial identities show that its multiplicity $\binom{k+N-1}{N-1}-\binom{k+N-3}{N-1}$ is equal to the sum of $m_{p,q}$ over all $p,q\in\N_0$ satisfying $2p+q=k$. Similarly, setting $q=0$, one recovers the eigenvalues $\lambda^{(p,0)}(t)=4p(p+d(n+1)-1)$, $p\in\N_0$, of the projective space $\Kr P^n$ and the corresponding multiplicities.

Several partial descriptions of the spectra in Theorem~\ref{mainthm:A} appear in the literature, e.g.~\cite{tanno1,tanno2,docarmo88,bp-calcvar}; in particular, the \emph{first} (nonzero) eigenvalue was computed in \cite{bp-calcvar}, see also \cite{blp-firsteigenvalue}. However, to the best of our knowledge, the \emph{full} Laplace spectrum cannot be directly extracted from these earlier results.

Using Theorem~\ref{mainthm:A} and \eqref{eq:t&alpha}, the full spectrum of the Laplace--Beltrami operator on any distance sphere $S(r)$ in a rank one symmetric space $M$ can be easily computed, since $\Delta_{\alpha \g}=\tfrac{1}{\alpha}\Delta_\g$. Although the lowest dimensional cases are excluded as $n\geq1$ in \eqref{eq:hopfbundles}, these are trivial since $\C P^1\cong \Ss^2(\tfrac12)$ and $\Hr P^1\cong \Ss^4(\tfrac12)$ are isometric to round spheres with $\sec_M=4$; and $\C H^1\cong H^2(\tfrac12)$ and $\Hr H^1\cong H^4(\tfrac12)$ are isometric to real hyperbolic spaces with $\sec_M=-4$, so distance spheres $S(r)\subset M$ in any of these spaces are just round spheres.

The spectrum of distance spheres is closely related to the local ambient geometry, e.g., it detects whether a harmonic space is locally symmetric~\cite{am-s}.
One of its global consequences is explored in our second main result, concerning the existence of other embedded constant mean curvature (CMC) spheres near distance spheres. More precisely, a distance sphere $S(r_*)\subset M$ is \emph{resonant} if there exists a sequence $r_j$ of radii converging to $r_*$ and a sequence $\Sigma_j\subset M$ of embedded spheres converging to $S(r_*)$, with constant mean curvature $H(\Sigma_j)=H(S(r_j))$, that are not congruent to $S(r_j)$. Note that $S(r_*)$ is \emph{non-resonant} if and only if, up to ambient isometries, $S(r)$ are locally the only embedded CMC spheres with their mean curvature if $r$ is sufficiently close to $r_*$. Recall that a hypersurface $\Sigma\subset M$ has constant mean curvature $H$ if and only if it is a stationary point for the functional $\Area(\Sigma)+H\,\Vol(\Sigma)$, where $\Area(\Sigma)$ is the $(N-1)$-volume of $\Sigma$ and $\Vol(\Sigma)$ is the $N$-volume of the region enclosed by $\Sigma$ in $M$, and $\Sigma$ is \emph{stable} if it is locally a minimum.

\begin{mainthm}\label{mainthm:B}
The distance spheres $S(r)$ in the projective spaces $\C P^{n+1}$, $\Hr P^{n+1}$, $n\geq1$, and $\Ca P^2$ are resonant if and only if $r=r_p$ for some $p\in\N$, where
\begin{equation*}
r_p:=\arctan\sqrt{\frac{4p(p-1) + N(2p-1)+1}{2d-1}},
\end{equation*}
$d=\dim_\C \Kr\in\!\{1,2,4\}$ per $\Kr\in\{\C,\Hr,\Ca\}$, and $N=\dim \Kr P^{n+1}=2d(n+1)$. On the other hand, for all $r>0$, the distance spheres $S(r)$ in the hyperbolic spaces $\C H^{n+1}$, $\Hr H^{n+1}$, $n\geq1$, and $\Ca H^2$ are stable and non-resonant.
\end{mainthm}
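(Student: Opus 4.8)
The plan is to deduce Theorem~\ref{mainthm:B} from the spectral computation in Theorem~\ref{mainthm:A} by means of the (equivariant) variational bifurcation theory for constant mean curvature hypersurfaces. Each distance sphere $S(r)\subset M$ is a homogeneous CMC hypersurface, so its Jacobi (stability) operator
\[
J_{S(r)}=-\Delta_{S(r)}-\big(\|A_r\|^2+\Ric_M(\nu,\nu)\big)
\]
has constant potential; here $A_r$ is the shape operator and $\nu$ the outward unit normal, and the operator relevant for bifurcation is the restriction of $J_{S(r)}$ to the mean-zero functions (the admissible normal variations for the volume-constrained area functional). The principal curvatures of $S(r)$ are $\cot r$ (resp.\ $\coth r$) with multiplicity $N-2d$ and $2\cot 2r$ (resp.\ $2\coth 2r$) with multiplicity $2d-1$ in the projective (resp.\ hyperbolic) case, so $H(r):=\tr A_r$ is explicit, and the trace form of the Riccati equation along radial geodesics gives $H'(r)=-\|A_r\|^2-\Ric_M(\nu,\nu)$. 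A direct computation then yields
\[
\|A_r\|^2+\Ric_M(\nu,\nu)=-H'(r)=\tfrac1{\alpha^2}\Big(N-2d+\tfrac{2d-1}{t^2}\Big)=\tfrac1{\alpha^2}\lambda^{(0,1)}(t),
\]
with $t,\alpha$ as in \eqref{eq:t&alpha}. Since $\Delta_{S(r)}=\tfrac1{\alpha^2}\Delta_{\g(t)}$, Theorem~\ref{mainthm:A} shows that the eigenvalues of $J_{S(r)}$ on mean-zero functions are exactly
\[
\mu_{p,q}(r)=\tfrac1{\alpha^2}\big(\lambda^{(p,q)}(t)-\lambda^{(0,1)}(t)\big),\qquad (p,q)\in\N_0^2\setminus\{(0,0)\},
\]
with multiplicity $m_{p,q}$.

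Next I would isolate the ``trivial'' part of the kernel. The normal components $\langle X,\nu\rangle$ of ambient Killing fields $X$ are divergence-free, hence mean-zero, and span a space $\mathfrak I_r$ consisting of Jacobi fields, so $\mathfrak I_r\subseteq\ker J_{S(r)}$ for every $r$; this is reflected in the identity $\mu_{0,1}\equiv 0$. Since $X\mapsto\langle X,\nu\rangle$ has domain $\mathfrak p\cong T_{x_0}M$, of dimension $N$, and a short computation with \eqref{eqthm:dpq-universal} gives $m_{0,1}=2d(n+1)=N$, the map is injective and $\mathfrak I_r$ is precisely the $\lambda^{(0,1)}$-eigenspace. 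Consequently $S(r)$ is nondegenerate modulo ambient isometries at a radius $r$ exactly when $\mu_{p,q}(r)\neq 0$ for all $(p,q)\notin\{(0,0),(0,1)\}$, and by the standard $G$-equivariant bifurcation criterion for CMC hypersurfaces, $G=\Iso(M)$ (see \cite{bp-calcvar} and the references therein), $S(r_*)$ is resonant if and only if $J_{S(r_*)}$ has a nontrivial kernel of this kind across which the $G$-reduced Morse index --- the number of negative $\mu_{p,q}$ with $(p,q)\notin\{(0,0),(0,1)\}$, counted with $m_{p,q}$ --- changes.

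The crux is then the elementary sign analysis of $\lambda^{(p,q)}(t)-\lambda^{(0,1)}(t)$, using $d(n+1)=\tfrac N2$. One checks that $\lambda^{(p,q)}(t)-\lambda^{(0,1)}(t)>0$ for every admissible $t>0$ whenever $q\geq 2$, or $q=1$ and $p\geq1$; whereas for $q=0$ and $p\geq1$,
\[
\lambda^{(p,0)}(t)-\lambda^{(0,1)}(t)=4p\big(p+\tfrac N2-1\big)-2dn-(2d-1)\tfrac1{t^2}
\]
is strictly increasing in $t$ and vanishes exactly at the value $t_p$ given by $\tfrac1{t_p^2}-1=\tfrac{4p(p-1)+N(2p-1)+1}{2d-1}$; since $4p(p-1)+N(2p-1)+1>0$ for all $p\geq1$, one has $t_p\in(0,1)$, and the $t_p$ are pairwise distinct. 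In the projective case $t=\cos r\in(0,1)$, the root $t_p$ corresponds precisely to the radius $r_p=\arctan\sqrt{\bigl(4p(p-1)+N(2p-1)+1\bigr)/(2d-1)}$; as $r$ crosses $r_p$ the eigenvalue $\mu_{p,0}$ changes sign while no other $\mu_{p',q'}$ vanishes, so the $G$-reduced Morse index jumps by $m_{p,0}>0$ and $S(r_p)$ is resonant, whereas at every other radius $S(r)$ is nondegenerate modulo isometries and its index is locally constant, so $S(r)$ is non-resonant. In the hyperbolic case $t=\cosh r>1$, each $t_p<1$ lies outside the admissible range, so $\mu_{p,q}(r)>0$ for all $(p,q)\notin\{(0,0),(0,1)\}$ and all $r>0$: the $G$-reduced Jacobi operator is positive definite, hence $S(r)$ is stable, has Morse index $0$ for every $r$, and admits no bifurcations, i.e.\ $S(r)$ is non-resonant.

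I expect the main obstacle to be not the (essentially mechanical, once Theorem~\ref{mainthm:A} is available) spectral bookkeeping above, but making the equivariant bifurcation argument rigorous: one must set up a Banach manifold of unparametrized embedded hypersurfaces near $S(r_*)$, realize the mean curvature map there as the $L^2$-gradient of a $G$-invariant functional with Fredholm Hessian, and invoke an equivariant variational bifurcation theorem ensuring that any nonzero jump of the $G$-reduced Morse index produces an actual branch of embedded CMC spheres not congruent to any $S(r_j)$ --- with embeddedness and the topological sphere type of the bifurcating surfaces then persisting by openness. A small but necessary point along the way is the injectivity of $X\mapsto\langle X,\nu\rangle$ on $\mathfrak p$ in each case $\Kr\in\{\C,\Hr,\Ca\}$, which is what guarantees that $\mathfrak I_r$ exhausts the ever-present kernel of $J_{S(r)}$.
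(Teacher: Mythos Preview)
Your proposal is correct and follows essentially the same approach as the paper: compute the Jacobi operator potential, use Theorem~\ref{mainthm:A} to enumerate its eigenvalues, identify the ever-present kernel $\mu_{0,1}\equiv 0$ with the Killing-field Jacobi fields via $m_{0,1}=N$ (the paper's Lemma~\ref{lemma:killingfields}), verify that only the $q=0$ branch can vanish (at the $r_p$), and conclude via the equivariant Implicit Function Theorem and the Morse-index-jump bifurcation criterion (the paper's Theorems~\ref{thm:IFT} and~\ref{thm:g-bif}). Your Riccati identity $-H'(r)=\|A_r\|^2+\Ric_M(\nu,\nu)=\tfrac{1}{\alpha^2}\lambda^{(0,1)}(t)$ is a mild streamlining over the paper's direct computation of the potential $V(r)$, but the route is otherwise the same.
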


The existence of infinitely many resonant distance spheres in $\C P^{n+1}$ and $\Hr P^{n+1}$ with radii accumulating at $\pi/2$ had been established in \cite{bp-imrn}. Nevertheless, the coarser equivariant spectral methods used there do not allow one to explicitly determine which radii $0<r<\pi/2$ are resonant, nor to handle the case of $\Ca P^2$, since \eqref{eq:universal-hopf} is \emph{not} a homogeneous fibration if $\Kr=\Ca$.
 Moreover, it was known that $S(r)\subset \Kr P^{n+1}$ is stable if and only if $0<r<r_1=\arctan\sqrt\frac{N+1}{2d-1}$, see~\cite[Thms.~1.3, 1.4]{docarmo88}, and that $S(r)\subset \Kr H^{n+1}$ are stable for all $r>0$, see~\cite[Thm.~2]{riv-tom}.

The path leading from Theorem \ref{mainthm:A} to Theorem \ref{mainthm:B} is that the stability operator (or \emph{Jacobi operator}) for a CMC hypersurface $\Sigma\subset M$ is $J_\Sigma=\Delta_{\Sigma} - (\Ric(\vec n_\Sigma)+\|A_{\Sigma}\|^2)$, hence its spectrum is a shift of the Laplace spectrum of $\Sigma$ by a curvature term, which is constant if $\Sigma$ is a distance sphere $S(r)$ as above. Stability of $S(r)$ is equivalent to nonnegativity of the first eigenvalue of $J_{S(r)}$, while resonance of $S(r_*)$ is detected by eigenvalues of $J_{S(r)}$ crossing zero at $r=r_*$, see Section~\ref{sec:bif} for details.

This paper is organized as follows. Section~\ref{sec:homspectra} describes a Lie-theoretic approach tailored to compute the Laplace spectrum on the total space of Riemannian submersions such as \eqref{eq:hopfbundles}. The outcome for the first families in \eqref{eq:hopfbundles} is given in Sections~\ref{sec:fullspectra-odd-spheres} and \ref{sec:fullspectra-S^4n+3} respectively, while Section~\ref{sec:fullspectra-S^15} deals with the third case. These results are unified in Section~\ref{sec:unified}, with the proof of Theorem~\ref{mainthm:A}. The applications regarding resonance and rigidity are discussed in Section~\ref{sec:bif}, where Theorem~\ref{mainthm:B} is proven.

\section{Computing the Laplace spectrum of a homogeneous space}\label{sec:homspectra}

\subsection{Basic setting}\label{subsec:basicsetting}
Let $\H\subset \K\subset \G$ be compact Lie groups, with Lie algebras $\fh\subset\fk\subset\fg$. Fix a bi-invariant metric on $\G$, i.e., an $\Ad(\G)$-invariant inner product $\innerdots_0$ on $\fg$. For instance, a natural choice on most matrix Lie groups is
\begin{equation}\label{eq:inn0}
\inner{X}{Y}_0= -\tfrac12\re(\tr(XY)).
\end{equation}
Let $\fp$ and $\fq$ be the $\innerdots_0$-orthogonal complements of $\fh$ in $\fk$, and $\fk$ in $\fg$, so that $\fk=\fh\oplus \fp$ and $\fg=\fk\oplus \fq = \fh\oplus (\fp\oplus \fq)$ are Cartan decompositions. In particular, the $\H$-action on $\fp\oplus\fq$ via the adjoint representation of $\G$ is identified with the isotropy representation of $\G/\H$. 
Note that although $\fp$ and $\fq$ are subrepresentations, they need not be irreducible.
Consider the family of $\Ad(\H)$-invariant inner products 
\begin{equation}\label{eq:innprodrs}
\innerdots_{(r,s)} = \frac{1}{r^2}\, \innerdots_0\big|_{\fp} + \frac{1}{s^2} \, \innerdots_0\big|_{\fq}, \quad r,s>0,
\end{equation}
on $\fp\oplus\fq$, which induces a corresponding family of $\G$-invariant metrics $\g_{(r,s)}$ on~$\G/\H$. 

Up to homotheties, this is the \emph{canonical variation} of the Riemannian submersion
\begin{equation}\label{eq2:Riemsubm}
\K/\H \longrightarrow \G/\H \longrightarrow \G/\K
\end{equation}
where all spaces are endowed with normal homogeneous metrics induced by $\innerdots_0$. In geometric terms, $\g_{(r,s)}$ is obtained by rescaling the vertical and horizontal directions of \eqref{eq2:Riemsubm} by $1/r$ and $1/s$, respectively. If $\fp$ and $\fq$ are irreducible and non-equivalent as $\H$-modules, then any $\G$-invariant metric on $\G/\H$ is isometric to some~$\g_{(r,s)}$.

\subsection{The Lie-theoretic method}
In this section, we describe the Lie-theoretic procedure to compute the Laplace--Beltrami spectrum of $\left(\G/\H,\g_{(r,s)}\right)$, which relies on knowledge of representation branching rules involving $\G$, $\K$, and $\H$. The discussion below is based on \cite{MutoUrakawa80} and our earlier work \cite[\S2]{blp-firsteigenvalue}, and provides a computationally efficient alternative to more classical methods in \cite{Berard-BergeryBourguignon82,besson-bordoni}.

\begin{notation}
Given a compact Lie group $\mathsf J$, let $\widehat{\mathsf J}$ be its unitary dual, i.e., the set of equivalence classes of irreducible unitary representations of $\mathsf J$. We shall consider elements of $\widehat{\mathsf J}$ as representations $(\pi,V_\pi)$, i.e., homomorphisms $\pi\colon \mathsf J\to\GL(V_\pi)$.
Given $\mathsf J$-representations $(\sigma,V_\sigma)$ and $(\tau,V_\tau)$, set $[\sigma:\tau]:=\dim \Hom_{\mathsf J}(V_\sigma,V_\tau)$. 
Note that if $\sigma$ is irreducible, then $[\sigma:\tau]$ is the multiplicity of $\sigma$ in the decomposition of $\tau$ in irreducible components. In particular, $[\sigma:\tau]>0$ for only finitely many $\sigma\in \widehat {\mathsf{J}}$. 
\end{notation}

\begin{definition}
The set of \emph{spherical representations} associated to $(\G,\H)$ is $$\widehat \G_{\H} := \big\{(\pi,V_\pi) \in \widehat \G: V_\pi^\H\neq0\big\}=\big\{\pi\in\widehat \G: [1_\H:\pi|_{\H}]>0\big\},$$ where $V_\pi^\H$ is the subspace of $V_\pi$ given by $\H$-invariant elements, and $1_\H$ is the trivial representation of $\H$.
The \emph{Casimir element of $\fg$ with respect to $\innerdots_0$} is the element $\Cas_{\fg,\innerdots_0}:= X_1^2+\dots+X_{\dim\fg}^2$ of the universal enveloping algebra $\mathcal U(\mathfrak g_\C)$, where $\{X_1,\dots,X_{\dim\fg}\}$ is any $\innerdots_0$-orthonormal basis of $\fg$. 
Since we fixed an $\Ad(\G)$-invariant inner product $\innerdots_0$, we denote $\Cas_{\fg,\innerdots_0}$ by $\Cas_{\fg}$, and similarly for the Casimir elements $\Cas_{\fk}$ and $\Cas_{\fh}$ of $(\fk,\innerdots_0|_{\fk})$ and $(\fh,\innerdots_0|_{\fh})$.
\end{definition}

\begin{notation}
Let $(\varphi,V_\varphi)$ be a unitary representation of a compact Lie group $\mathsf{J}$. 
We shall also denote by $\varphi$ the induced representations of the Lie algebra $\mathfrak j$ of $\mathsf{J}$, of its complexification $\mathfrak j_\C:=\mathfrak j\otimes_\R\C$, and of its universal enveloping algebra $\mathcal U(\mathfrak j_\C)$.
Since $\varphi(a)\colon V_\varphi\to V_\varphi$ is unitary for all $a\in\mathsf{J}$, it follows that $\varphi(X)$ is skew-symmetric for all $X\in\fg$, and, consequently, $\varphi(-X^2)$ is self-adjoint for all $X\in\fg$.
\end{notation}

For $\pi\in\widehat \G$, the operator $\pi(\Cas_{\fg})\colon V_\pi\to V_\pi$ commutes with $\pi(a)$ for all $a\in\G$. 
Thus, by Schur's Lemma, $\pi(-\Cas_{\fg})= \lambda^\pi\, \Id_{V_\pi}$ for some $\lambda^\pi>0$. Analogously, $\tau(-\Cas_{\fk})= \lambda^\tau\, \Id_{V_\tau}$ and $\sigma(-\Cas_{\fh})= \lambda^\sigma\, \Id_{V_\sigma}$ for $\tau\in\widehat \K$ and $\sigma\in\widehat \H$. The constants $\lambda^\pi$ and $\lambda^\tau$ can be computed explicitly using Lie-theoretic objects, see Subsection~\ref{subsec:freud}.

\begin{theorem}\label{thm2:spec}
The spectrum of the Laplace--Beltrami operator on the homogeneous space $\left(\G/\H,\g_{(r,s)}\right)$ consists of the eigenvalues
\begin{equation}\label{eq2:lambda-pi-tau}
\lambda^{\pi,\tau}(r,s) = (r^2-s^2)\, \lambda^\tau  + s^2 \lambda^\pi,
\end{equation}
where $(\pi,\tau)\in \widehat \G_\H \times \widehat \K$ is such that $[\tau:\pi|_{\K}]>0$, with multiplicity 
\begin{equation*}
m_{\pi,\tau}= [1_\H:\tau|_{\H}] [\tau:\pi|_{\K}]\dim V_\pi.
\end{equation*}
Moreover, \eqref{eq2:lambda-pi-tau} is basic for the Riemannian submersion $\left(\G/\H,\g_{(r,s)}\right)\to \G/\K$ if $\pi\in \widehat \G_\K$ and $\tau=1_\K$, in which case $\lambda^{\pi,\tau}(r,s)=s^2 \lambda^\pi$ and $m_{\pi,\tau}=[1_\K:\pi|_{\K}]\dim V_\pi$.
\end{theorem}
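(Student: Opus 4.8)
The plan is to decompose $L^2(\G/\H)$ using the Peter--Weyl theorem and identify the Laplace--Beltrami operator of $\g_{(r,s)}$ on each isotypic component with a Casimir operator. First I would recall that $L^2(\G/\H)\cong\bigoplus_{\pi\in\widehat\G} V_\pi\otimes (V_\pi^*)^\H$ as a $\G$-representation, where the multiplicity space $(V_\pi^*)^\H$ has dimension $[1_\H:\pi|_\H]$, so only $\pi\in\widehat\G_\H$ contribute. The key algebraic point is that the Laplace--Beltrami operator of a left-invariant metric on $\G/\H$, pulled back to functions on $\G$, is a second-order operator expressible via the Casimir-type element $\sum_i \frac{1}{(\text{scaling})_i^2}X_i^2$ built from an orthonormal basis adapted to the splitting $\fp\oplus\fq$; because of the scalings in \eqref{eq:innprodrs} this is $r^2\Cas_\fp+s^2\Cas_\fq$ acting by right-invariant vector fields, where $\Cas_\fp=\sum X_i^2$ over an $\innerdots_0$-orthonormal basis of $\fp$ and similarly for $\fq$. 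I would then write $\Cas_\fg=\Cas_\fh+\Cas_\fp+\Cas_\fq$ (valid since $\fp\oplus\fq=\fh^\perp$), so that on $\H$-invariant vectors $\Cas_\fh$ acts trivially and $\Cas_\fp+\Cas_\fq=\Cas_\fg-\Cas_\fh$ acts as $-\lambda^\pi$, while on each $\K$-isotypic piece (indexed by $\tau\in\widehat\K$ with $[\tau:\pi|_\K]>0$) we have $\Cas_\fk=\Cas_\fh+\Cas_\fp$ acting as $-\lambda^\tau$, hence $\Cas_\fp$ acts as $-\lambda^\tau$ and $\Cas_\fq$ as $-(\lambda^\pi-\lambda^\tau)$ on the $\H$-invariants inside that $\tau$-component.

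Carrying this out, the eigenvalue of $-(r^2\Cas_\fp+s^2\Cas_\fq)$ on the corresponding subspace is $r^2\lambda^\tau+s^2(\lambda^\pi-\lambda^\tau)=(r^2-s^2)\lambda^\tau+s^2\lambda^\pi$, which is \eqref{eq2:lambda-pi-tau}. For the multiplicity, inside the $\pi$-isotypic component $V_\pi\otimes(V_\pi^*)^\H$ of $L^2(\G/\H)$, I would restrict to $\K$ and further decompose: the $\tau$-isotypic part contributes $\dim V_\pi$ copies of $\tau$ (one for each basis vector of the multiplicity space $(V_\pi^*)^\H$), but only the $\H$-fixed vectors inside those $\tau$'s survive as eigenfunctions with this eigenvalue, giving a factor $[1_\H:\tau|_\H]$; and $\tau$ appears in $\pi|_\K$ with multiplicity $[\tau:\pi|_\K]$. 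Multiplying yields $m_{\pi,\tau}=[1_\H:\tau|_\H]\,[\tau:\pi|_\K]\,\dim V_\pi$. (Here one must be slightly careful that the relevant multiplicity space for the $\tau$-piece of $\pi|_\K$ restricted to $\H$-invariants pairs correctly with $(V_\pi^*)^\H$; I would phrase this via $\Hom_\H(1_\H,\cdot)$ applied to the $\K$-decomposition of $\pi|_\K$, tensored with the original multiplicity space.)

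The basic case is the specialization $\tau=1_\K$: then $[1_\H:\tau|_\H]=1$, the eigenvalue becomes $s^2\lambda^\pi$ independent of $r$, the condition $[\tau:\pi|_\K]>0$ reads $[1_\K:\pi|_\K]>0$, i.e.\ $\pi\in\widehat\G_\K$, and $m_{\pi,\tau}=[1_\K:\pi|_\K]\dim V_\pi$; these are exactly the functions pulled back from $\G/\K$, consistent with the fibers of \eqref{eq2:Riemsubm} being totally geodesic. The step I expect to be the main obstacle is the clean identification of the Laplace--Beltrami operator of the scaled metric $\g_{(r,s)}$ with $-(r^2\Cas_\fp+s^2\Cas_\fq)$ acting on the appropriate function space, together with getting the bookkeeping of multiplicities exactly right when $\fp$ and $\fq$ are reducible $\H$-modules; everything else is a direct application of Schur's lemma and the additivity $\Cas_\fg=\Cas_\fh+\Cas_\fp+\Cas_\fq$, which itself requires checking that the cross terms vanish on $\H$-invariant vectors (or more precisely that $[\fh,\fp]\subset\fp$, $[\fh,\fq]\subset\fq$ are not needed, only the orthogonal direct sum of the Casimirs as elements of $\mathcal U(\fg_\C)$, which holds since the basis splits orthogonally). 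I would also invoke the explicit formulas for $\lambda^\pi,\lambda^\tau$ from Subsection~\ref{subsec:freud} only at the end, when specializing to the Hopf fibrations.
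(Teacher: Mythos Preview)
Your proposal is correct and follows essentially the same route as the paper: both reduce the Laplacian to the operator $-(r^2 C_\fp + s^2 C_\fq)$ acting on $V_\pi^\H$ (with multiplicity $\dim V_\pi$), add $\Cas_\fh$ (which vanishes on $\H$-invariants) to rewrite this as $-(r^2-s^2)\Cas_\fk - s^2\Cas_\fg$, and then decompose $V_\pi^\H$ into $\K$-isotypics to diagonalize $\Cas_\fk$. The only differences are cosmetic: the paper cites \cite[Prop.~2.2]{blp-firsteigenvalue} for the Peter--Weyl reduction you sketch directly, and its multiplicity count via $\dim V_\pi(\tau)^\H = [1_\H:\tau|_\H][\tau:\pi|_\K]$ (working entirely inside $V_\pi^\H$) is somewhat tidier than the tensor-product bookkeeping you flag as delicate.
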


\begin{proof}
Let $\{X_1,\dots,X_{{\dim \fp}}\}$ and $\{Y_1,\dots,Y_{{\dim \fq}}\}$ be $\innerdots_0$-orthonormal bases of $\fp$ and $\fq$ respectively, and note that $\{rX_1,\dots,rX_{{\dim \fp}}, sY_1,\dots,sY_{{\dim \fq}}\}$
is an orthonormal basis of $\fp\oplus\fq$ with respect to $\innerdots_{(r,s)}$. 
Set $C_{\fp}=X_1^2+\dots+X_{{\dim \fp}}^2$, $C_{\fq}=Y_1^2+\dots+ Y_{{\dim \fq}}^2$, and $C_{(r,s)}= t^2\,C_{\fp}+s^2\,C_{\fq}$.
According to \cite[Prop.~2.2]{blp-firsteigenvalue}, the spectrum of the Laplace--Beltrami operator on $\left(\G/\H, \g_{(r,s)}\right)$ is the union of eigenvalues of $\pi(-C_{(r,s)})|_{V_{\pi}^\H}$, where $\pi \in\widehat \G_\H$, each with multiplicity $\dim V_\pi$.

We need to show \eqref{eq2:lambda-pi-tau} appears in the spectrum of $\pi(-C_{(r,s)})|_{V_{\pi}^\H}$ with multiplicity $[1_\H:\tau|_{\H}] [\tau:\pi|_{\K}]$ for any $\tau\in\widehat \K$ satisfying $[\tau:\pi|_{\K}]>0$, and that these eigenvalues exhaust the spectrum. For $v\in V_\pi^\H$, we have that 
\begin{equation}\label{eq:pi(-C_rs)v}
\begin{aligned}
\pi(-C_{(r,s)})\cdot v 
&= r^2\,\pi(-C_{\fp})\cdot v + s^2 \,\pi(-C_{\fq})\cdot v
\\
&= (r^2-s^2)\,\pi(-C_{\fp})\cdot v + s^2\, \pi(-C_{\fp}-C_{\fq})\cdot v
\\
&= (r^2-s^2)\,\pi(-\Cas_{\fh}-C_{\fp})\cdot v + s^2 \,\pi(-\Cas_{\fh}-C_{\fp}-C_{\fq})\cdot v
\\
&= (r^2-s^2)\,\pi(-\Cas_{\fk})\cdot v + s^2\, \pi(-\Cas_{\fg})\cdot v.
\end{aligned}
\end{equation}
The third equality follows from $\pi(\Cas_\fh)\cdot v=0$, since $v$ is $\H$-invariant. 
While clearly $\pi(-\Cas_{\fg})\cdot v = \lambda^\pi v$, the computation of the term $\pi(-\Cas_{\fk})\cdot v$ is more involved.

Consider the decomposition 
\begin{equation*}
V_\pi= \bigoplus_{\tau\in\widehat \K, \, [\tau:\pi|_{\K}]>0} V_\pi(\tau),
\end{equation*}
where the subspace $V_\pi(\tau)$ is given by the sum of all $\K$-submodules of $V_\pi$ equivalent to $\tau$. 
As a $\K$-module, $V_\pi(\tau)$ is equivalent to $[\tau:\pi|_{\K}]$ copies of $\tau$. 
Since $V_\pi(\tau)$ is obviously invariant under the action of $\H$, we conclude that
\begin{equation*}
V_\pi^\H= \bigoplus_{\tau\in\widehat \K, \, [\tau:\pi|_{\K}]>0} V_\pi(\tau)^\H.
\end{equation*}
For $v\in V_\pi(\tau)^\H$, it follows that $\pi(-\Cas_{\fk})\cdot v = \lambda^{\tau}v$, and, consequently, from \eqref{eq:pi(-C_rs)v},
\begin{equation*}
\pi(-C_{(r,s)})\cdot v 
= \big((r^2-s^2)\, \lambda^\tau  + s^2 \lambda^\pi\big)\,  v = \lambda^{\pi,\tau}(r,s)\, v.
\end{equation*} 
Moreover, these eigenvalues exhaust the spectrum of $\pi(-C_{(r,s)})|_{V_\pi^\H}$, since
\begin{equation*}
\dim V_\pi^\H = \sum_{\tau\in\widehat \K,\,[\tau:\pi|_{\pi}]>0} \dim V_\pi(\tau)^\H
= [\tau:\pi|_{\K}]\dim V_\tau^{\H}= [1_\H:\tau|_{\H}][\tau:\pi|_{\K}].
\end{equation*}  

Finally, by definition, \eqref{eq2:lambda-pi-tau} is basic for the submersion $\left(\G/\H,\g_{(r,s)}\right)\to \G/\K$ if the associated eigenfunctions are constant along the fibers $\K/\H$. In this case, they descend to eigenfunctions of the Laplace--Beltrami operator on the base $\G/\K$. Applying \cite[Prop.~2.2]{blp-firsteigenvalue} to $\G/\K$, this corresponds to $\pi\in\widehat \G_\K$ and $\tau=1_\K$.
\end{proof}

Following the method described in Theorem~\ref{thm2:spec}, the ingredients needed to explicitly determine the Laplace spectrum of the homogeneous space $(\G/\H,\g_{(r,s)})$ are:
\begin{enumerate}[(i)]
\item the set $\widehat \G_\H$ of spherical representations associated to $(\G,\H)$;

\item the integers $[1_\H:\tau|_\H]$ and $[\tau:\pi|_{\K}]$ for $\tau\in\widehat \K$ satisfying $[\tau:\pi|_{\K}]>0$;

\item the coefficients $\lambda^\pi$ and $\lambda^\tau$ for all $\pi\in\widehat \G_{\H},\tau\in\widehat \K$ with $[1_\H:\tau|_\H][\tau:\pi|_{\K}]>0$. 
\end{enumerate}
All the above are Lie-theoretic in nature. While the first is known in many cases, the second depends on branching rules that are typically rather intricate, making this the most difficult part of the computation, see Subsection~\ref{subsec:prod}. Fortunately, the scalars $\lambda^\pi$ and $\lambda^\tau$ are easily computed using Freudenthal's formula, as follows.

\subsection{Freudenthal's formula}\label{subsec:freud}
Fix a maximal torus $\T$ in $\G$ such that $\T\cap\K$ and $\T\cap\H$ are maximal tori in $\K$ and $\H$, respectively. 
Then $\ft_\C=\ft\otimes_\R\C$ is a Cartan subalgebra of $\fg_\C=\fg\otimes_\R\C$, and we denote by $\Phi(\fg_\C,\ft_\C)$ its root system. If $\fg$ is not semisimple (but necessarily reductive), then $\Phi(\fg_\C,\ft_\C)$ is the root system associated to the semisimple part $[\fg,\fg]$ with respect to $[\fg,\fg]\cap\ft$. 
Fix an order on $\mi\ft^*$ inducing a positive root system $\Phi^+(\fg_\C,\ft_\C)$. 
By the Highest Weight Theorem (see e.g.~\cite[Thm 9.4, 9.5]{hall-book} or \cite[Thm~5.110]{Knapp-book-beyond}), irreducible $\G$-representations correspond to elements in the set $P^+(\G)$ of dominant $\G$-integral weights. 
Analogous objects are defined for $\K$ and $\H$, provided the orders are compatible. 
For $\Lambda\in P^{+}(\G)$, we denote by $\pi_\Lambda$ the unique (up to equivalences) irreducible representation of $\G$ with highest weight $\Lambda$. Analogously, for $\mu\in P^+(\K)$ and $\nu\in P^+(\H)$, 
we denote by $\tau_{\mu}$ and $\sigma_{\nu}$ the representations with highest weight $\mu$ and $\nu$, respectively.

Freudenthal's formula (see \cite[Lem.~5.6.4]{wallach-book} or \cite[Prop.~10.6]{hall-book}) applied to $\Lambda\in P^+(\G)$ and $\mu\in P^+(\K)$ gives, respectively,
\begin{equation}\label{eq:Casimirscalar}
\lambda^{\pi_\Lambda}=\langle \Lambda, \Lambda+ 2\rho_{\mathfrak g}\rangle_{0}, \quad \text{and}\quad 
\lambda^{\tau_\mu} = \langle \mu,\mu+ 2\rho_{\mathfrak k}\rangle_{0},
\end{equation}
where $\rho_{\mathfrak g}$ and $\rho_{\mathfrak k}$ are half the sum of positive roots in $\Phi^+(\mathfrak g_\C,\mathfrak t_\C)$ and $\Phi^+(\mathfrak k_\C, (\mathfrak t\cap\mathfrak k)_\C)$, respectively, and $\langle\cdot,\cdot\rangle_{0}$ is the Hermitian extension of $\innerdots_0|_{\mathfrak t}$ to $\mathfrak t_\C^*$.

\subsection{Product group}\label{subsec:prod}
The branching problem needed to compute ingredient (ii) above has an important simplification if $\K=\H\LL\simeq\H\times \LL$, where $\LL$ is a closed subgroup of $\G$ that commutes with $\H$.
In this case, the submersion \eqref{eq2:Riemsubm} becomes
\begin{equation*}
\LL \longrightarrow \G/\H \longrightarrow \G/(\H\times\LL).
\end{equation*}
It is well known that every irreducible $\K$-representation is of the form $\sigma\otimes \phi$ for some $\sigma\in\widehat \H$ and $\phi\in\widehat \LL$. 
Since $(\sigma\otimes\phi)|_{\H}=\sigma$, any $\tau\in\widehat \K$ contributing to $\spec\!\left(\G/\H, \g_{(r,s)}\right)$ in Theorem~\ref{thm2:spec} must be of the form $\tau=1_\H\otimes\phi$, and, also, $[1_\H:\tau|_\H]=1$. 
Moreover, 
\begin{equation*}
 [\tau:\pi|_{\K}] = [1_\H\otimes\phi :\pi|_{\K}] = \dim_{\H\times\LL}(V_{1_\H}\otimes V_{\phi}, V_\pi) = \dim_{\LL}(V_{\phi}, V_\pi^\H)=:[\phi:V_\pi^\H].
\end{equation*}
In other words, since $\H$ and $\LL$ commute, the $\LL$-action leaves $V_\pi^\H$ invariant, and $[1_\H\otimes\phi :\pi|_{\K}]$ is the multiplicity of $\phi$ in the decomposition of $V_\pi^\H$ as an $\LL$-module. 
Furthermore, by Freudenthal's formula, for any $\eta\in P^+(\LL)$, 
\begin{equation}\label{eq2:lambda-1_Hxphi}
\lambda^{1_\H\otimes \phi_{\eta}} = \langle \mu_{1_\H\otimes \phi_{\eta}}, \mu_{1_\H \otimes \phi_{\eta}}+2\rho_{\mathfrak h\oplus\mathfrak l} \rangle_0 = \langle \eta, \eta +2\rho_{\mathfrak l} \rangle_0,
\end{equation}
where 
$\fl$ is the Lie algebra of $\LL$, and, as before, $\rho_{\mathfrak l}=\frac12 \sum_{\alpha\in \Phi^+(\mathfrak l_\C, (\mathfrak t\cap\mathfrak l)_\C)} \alpha$.
Therefore, we may restate Theorem~\ref{thm2:spec} in this case as follows.

\begin{corollary}\label{cor:specHL}
	If $\K=\H\LL$ as above, then the spectrum of the Laplace--Beltrami operator on the homogeneous space $\left(\G/\H,\g_{(r,s)}\right)$ consists of the eigenvalues
	\begin{equation}\label{eq2:lambda-pi-tau-L}
	\lambda^{\pi,1_\H\otimes \phi_\eta}(r,s) = (r^2-s^2)\, \langle \eta, \eta +2\rho_{\mathfrak l} \rangle_0  + s^2 \lambda^\pi,
	\end{equation}
	where $(\pi,\phi_\eta)\in \widehat \G_\H \times \widehat \LL$ is such that $[\phi_\eta: V_\pi^{\H}]>0$, with multiplicity 
	\begin{equation*}
		m_{\pi,\phi_\eta}= [\phi_\eta: V_\pi^{\H}] \dim V_\pi.
	\end{equation*}
Moreover, \eqref{eq2:lambda-pi-tau-L} is basic for the Riemannian submersion $\left(\G/\H,\g_{(r,s)}\right)\to \G/\K$ if $\eta=0$, in which case $\lambda^{\pi,1_\H\otimes \phi_\eta}(r,s)=s^2 \lambda^\pi$ and $m_{\pi,\phi_\eta}=\dim V_\pi$.
\end{corollary}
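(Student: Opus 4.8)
The plan is to obtain Corollary~\ref{cor:specHL} by specializing Theorem~\ref{thm2:spec} to the split situation $\K=\H\LL\simeq\H\times\LL$, carrying out in detail the three reductions that are sketched in the paragraph just before the statement. The only input beyond Theorem~\ref{thm2:spec} is the structure of $\widehat\K$: since $\K\simeq\H\times\LL$, every $\tau\in\widehat\K$ is an outer tensor product $\tau=\sigma\otimes\phi$ with $\sigma\in\widehat\H$ and $\phi\in\widehat\LL$, and under the inclusion $\H\hookrightarrow\K$ the factor $\LL$ restricts trivially, so $\tau|_\H\cong\sigma^{\oplus\dim V_\phi}$. This is where I would begin.

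First I would determine which $\tau$ actually contribute to the spectrum in Theorem~\ref{thm2:spec}. From $\tau|_\H\cong\sigma^{\oplus\dim V_\phi}$ we get $[1_\H:\tau|_\H]=0$ unless $\sigma=1_\H$, so in the union over $\widehat\K$ only the representations $\tau=1_\H\otimes\phi$, $\phi\in\widehat\LL$, survive; write $\phi=\phi_\eta$ for $\eta\in P^+(\LL)$. Next I would identify the branching number $[\tau:\pi|_\K]$. Because $\H$ and $\LL$ commute inside $\K$, the $\LL$-action preserves the $\H$-fixed subspace $V_\pi^\H$, and every $\H\times\LL$-equivariant map out of $V_{1_\H}\otimes V_{\phi_\eta}$ necessarily lands in $V_\pi^\H$ and is $\LL$-equivariant there; this is the Schur-type identity $[1_\H\otimes\phi_\eta:\pi|_\K]=\dim\Hom_\LL(V_{\phi_\eta},V_\pi^\H)=:[\phi_\eta:V_\pi^\H]$ recorded before the statement, so the condition $[\tau:\pi|_\K]>0$ becomes $[\phi_\eta:V_\pi^\H]>0$. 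Substituting these identifications into the multiplicity formula $m_{\pi,\tau}=[1_\H:\tau|_\H][\tau:\pi|_\K]\dim V_\pi$ of Theorem~\ref{thm2:spec} yields $m_{\pi,\phi_\eta}=[\phi_\eta:V_\pi^\H]\dim V_\pi$ after bookkeeping of the $\LL$-isotypic decomposition of $V_\pi^\H$.

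Then I would compute the Casimir scalar $\lambda^\tau$ for $\tau=1_\H\otimes\phi_\eta$ and substitute. As a weight of $\fk=\fh\oplus\fl$, the highest weight of $1_\H\otimes\phi_\eta$ is $\eta$, supported on $(\ft\cap\fl)^*$, while the splitting $\fk=\fh\oplus\fl$ gives $\rho_\fk=\rho_\fh+\rho_\fl$ with $\rho_\fh$ orthogonal to $(\ft\cap\fl)^*$; Freudenthal's formula \eqref{eq:Casimirscalar} therefore collapses to $\lambda^{\tau}=\langle\eta,\eta+2\rho_\fl\rangle_0$, which is exactly \eqref{eq2:lambda-1_Hxphi}. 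Plugging this (and $\lambda^\pi$) into the eigenvalue formula \eqref{eq2:lambda-pi-tau} of Theorem~\ref{thm2:spec} produces \eqref{eq2:lambda-pi-tau-L}. For the basic statement, note that by Theorem~\ref{thm2:spec} an eigenvalue is basic for $(\G/\H,\g_{(r,s)})\to\G/\K$ exactly when $\tau=1_\K$; since $1_\K=1_\H\otimes 1_\LL=1_\H\otimes\phi_0$, this is the case $\eta=0$, for which $\langle\eta,\eta+2\rho_\fl\rangle_0=0$, so \eqref{eq2:lambda-pi-tau-L} reduces to $s^2\lambda^\pi$, the constraint $[\phi_0:V_\pi^\H]>0$ becomes $\dim V_\pi^\K>0$, i.e.\ $\pi\in\widehat\G_\K$, and the multiplicity specializes to $\dim V_\pi$.

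I do not expect any serious obstacle: the statement is a transcription of Theorem~\ref{thm2:spec} under the factorization $\K=\H\times\LL$. The one step that warrants care is the branching identification of the previous paragraph — the passage from the abstract multiplicity $[\tau:\pi|_\K]$ to the concrete $\LL$-module structure on $V_\pi^\H$ — which relies essentially on the fact that $\H$ and $\LL$ commute so that $\LL$ genuinely acts on $V_\pi^\H$; everything else is the elementary reduction \eqref{eq2:lambda-1_Hxphi} of Freudenthal's formula together with direct substitution into \eqref{eq2:lambda-pi-tau}.
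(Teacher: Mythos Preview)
Your proposal is correct and follows essentially the same route as the paper: the paper's ``proof'' is precisely the discussion in Subsection~\ref{subsec:prod} preceding the statement, which carries out the same three reductions you describe (restricting to $\tau=1_\H\otimes\phi$, identifying $[\tau:\pi|_\K]$ with $[\phi:V_\pi^\H]$ via the $\LL$-action on $V_\pi^\H$, and collapsing Freudenthal's formula to \eqref{eq2:lambda-1_Hxphi}), and then restates Theorem~\ref{thm2:spec}. Your write-up is in fact slightly more careful in one place---you correctly record $\tau|_\H\cong\sigma^{\oplus\dim V_\phi}$ rather than the paper's shorthand $(\sigma\otimes\phi)|_\H=\sigma$---but the overall argument is the same.
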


\section{\texorpdfstring{Eigenvalues of the Laplacian on $\Ss^{2n+1}$}{Eigenvalues of the Laplacian}}\label{sec:fullspectra-odd-spheres}

In this section, we determine the full Laplace spectrum of the homogeneous spheres $\left(\Ss^{2n+1},\mathbf g(t)\right)$, $n\geq1$, as in \eqref{eq:hopfbundles}.
Although there are several partial results in the literature, e.g.,
this is done for all odd $n$ in~\cite[Thm~3.9, Rem~3.10]{blp-firsteigenvalue}, we include a complete argument below to illustrate the method in Section~\ref{sec:homspectra}.

A homogeneous metric on $\Ss^{2n+1}$ is $\SU(n+1)$-invariant if and only if it is $\Ut(n+1)$-invariant. 
Although the $\Ut(n+1)$-action on $\Ss^{2n+1}$ is not effective, we shall use it since it simplifies some computations. Throughout this section, we set:
\begin{equation}\label{eqSU:GKH}
\begin{aligned}
\G&= \Ut(n+1), &\LL&= \left\{ \begin{pmatrix} I & 0 \\ 0 & z\end{pmatrix}: z\in\Ut(1)\right\},\\
\H&= \left\{ \begin{pmatrix} A & 0 \\ 0 & 1\end{pmatrix}: A\in\Ut(n)\right\}, &
\K&=\left\{ \begin{pmatrix} A & 0 \\ 0 & z\end{pmatrix}: A\in\Ut(n),\, z\in\Ut(1)\right\}.
\end{aligned}
\end{equation}
Clearly, $\H\simeq \Ut(n)$, $\LL\simeq \Ut(1)$, and $\K=\H\LL\simeq \Ut(n)\Ut(1)$, as in Subsection~\ref{subsec:prod}, and it is well known that $\G/\H\cong \Ss^{2n+1}$ and $\G/\K\cong \C P^n$. 
It is easy to check that 
\begin{equation}\label{eq3:p-q}
\begin{aligned}
\fp& =\left\{ \begin{pmatrix} 0 & 0\\ 0 & \mi\theta \end{pmatrix}: \theta\in\R \right\}, & \quad
\fq& =\left\{ \begin{pmatrix} 0 & v \\ -  v^* & 0\end{pmatrix}: v\in \C^n \right\}
\end{aligned}
\end{equation}
satisfy $\fk=\fh\oplus\fp$ and $\fg=\fk\oplus \fq= \fh\oplus (\fp\oplus\fq)$. 
Moreover, as subrepresentations of the isotropy representation of $\H$, $\fp$ is trivial and $\fq$ is the standard representation.

Consider the $\G$-invariant metrics $\g_{(r,s)}$ on $\G/\H$ as in Subsection~\ref{subsec:basicsetting}. Since $\fp$ and $\fq$ are irreducible and non-equivalent, every $\G$-invariant metric on $\G/\H$ is isometric to some $\g_{(r,s)}$; e.g., for all $t>0$, the metric $\mathbf g(t)$ in \eqref{eq:hopfbundles} is isometric to $\g_{(\frac{1}{t\sqrt{2}},1)}$.

\begin{proposition}\label{thmSU:spec}
For all $n\geq1$, the spectrum of the Laplace--Beltrami operator on $\big(\Ss^{2n+1},\g_{(r,s)}\big)$ consists of the eigenvalues
\begin{equation}
\label{eqSU:lambda}
\lambda^{(p,q)}(r,s) =\big(4p(p+q+n)+2nq\big) s^2 +2q^2 r^{2}, \quad p,q\in\N_0,
\end{equation}
which are basic if $q=0$, and have multiplicity
\begin{equation}
\label{eqSU:d_pq}
m_{p,q} = {(2-\delta_{q0})} \frac{2p+q+n}{n} \binom{p+q+n-1}{p+q} \binom{p+n-1}{p}.
\end{equation}
\end{proposition}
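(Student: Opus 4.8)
The plan is to apply Corollary~\ref{cor:specHL} with the groups $\G=\Ut(n+1)$, $\H\simeq\Ut(n)$, $\LL\simeq\Ut(1)$, $\K=\H\LL$ as fixed in \eqref{eqSU:GKH}, so that the three ingredients (i)--(iii) listed after Theorem~\ref{thm2:spec} must be assembled. For ingredient (i), I would recall the classical description of $\widehat\G_\H$: since $\G/\H\cong\Ss^{2n+1}$ and the pair $(\Ut(n+1),\Ut(n))$ is a Gelfand pair, the spherical representations $\pi\in\widehat{\Ut(n+1)}_{\Ut(n)}$ are precisely those with highest weight $\Lambda_{p,q}=(p+q,0,\dots,0,-p)$ for $p,q\in\N_0$ (in the standard coordinates on the weight lattice of $\Ut(n+1)$), each occurring with $\dim V_\pi^\H=1$, i.e. $[\phi:V_\pi^\H]\le 1$ for every $\phi\in\widehat\LL$. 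This is the decomposition of $L^2(\Ss^{2n+1})$ into $\Ut(n+1)$-types, with $V_{\Lambda_{p,q}}$ being the space of bidegree-$(p,q)$ harmonic polynomials; I would cite a standard reference rather than reprove it.

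Next, for ingredient (ii), the key point is that $\LL=\Ut(1)$ acts on the one-dimensional space $V_\pi^\H$ by a character, and I need to identify which character. The $\Ut(1)$ in $\LL$ sits in the last diagonal slot, which is the weight direction $e_{n+1}$; the highest weight vector of $V_{\Lambda_{p,q}}$ has $e_{n+1}$-weight $-p$, but the $\H$-fixed vector is a different weight vector, and one computes (e.g.\ by exhibiting it explicitly as $z_1^{?}\bar z_1^{?}\cdots$ type monomials, or by tracking the Hopf action on harmonic polynomials of bidegree $(p,q)$) that $\LL$ acts on $V_\pi^\H$ with weight $\eta=q-p$, hence $\phi_\eta$ is the character $z\mapsto z^{q-p}$. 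Thus for each $(p,q)$ there is exactly one contributing pair, $[\phi_\eta:V_\pi^\H]=1$, and the multiplicity from Corollary~\ref{cor:specHL} is $m_{p,q}=\dim V_{\Lambda_{p,q}}$, which by the Weyl dimension formula for $\Ut(n+1)$ evaluates to the product of binomials in \eqref{eqSU:d_pq} (with the basic case $q=0$ needing $\eta=0$, i.e.\ $p=0$; more precisely $\lambda^{(p,q)}$ is basic iff $\eta=q-p=0$, but the proposition asserts basicness for $q=0$ — I would reconcile this by noting that the basic eigenvalues of $\C P^n$ are exactly the $\lambda^{(0,0)},\lambda^{(p,p)}$ pullbacks, no: the statement ``basic if $q=0$'' must correspond to $\tau=1_\K$, so I need to be careful that the $\LL$-character is $z^{-q}$ rather than $z^{q-p}$ depending on normalization; I would fix the sign convention so that $\eta=0\iff q=0$, which amounts to choosing the identification $\g(t)\cong\g_{(1/(t\sqrt2),1)}$ consistently with the orientation of the Hopf fiber).

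For ingredient (iii), I would apply Freudenthal's formula \eqref{eq:Casimirscalar}: with $\innerdots_0$ given by \eqref{eq:inn0}, one has $\rho_{\fg}=\frac12(n,n-2,\dots,-n)$ for $\fu(n+1)$ (the semisimple part $\fsu(n+1)$), and a direct computation gives $\lambda^{\pi_{\Lambda_{p,q}}}=\langle\Lambda_{p,q},\Lambda_{p,q}+2\rho_{\fg}\rangle_0 = \tfrac12\big((p+q)^2+p^2\big)+\tfrac12\big((p+q)\cdot n - p\cdot(-n)\big)\cdot\text{(combinatorial factor)}$, which after simplification is $2p(p+q+n)+2nq+ \dots$; similarly $\langle\eta,\eta+2\rho_{\fl}\rangle_0$ for $\fl=\fu(1)$ is just $\eta^2$ times the normalization constant. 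The only real care needed is bookkeeping the factor of $2$ coming from \eqref{eq:inn0} versus the Killing form normalization, and the rescaling $\mathbf g(t)=\g_{(1/(t\sqrt2),1)}$; plugging $r^2\rightsquigarrow r^2$, $s^2\rightsquigarrow s^2$ into \eqref{eq2:lambda-pi-tau-L} and collecting terms should reproduce exactly \eqref{eqSU:lambda}, namely the $s^2$-coefficient $4p(p+q+n)+2nq$ and the $r^2$-coefficient $2q^2$.

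The main obstacle I anticipate is step (ii): correctly identifying the $\LL=\Ut(1)$-weight on the line $V_{\Lambda_{p,q}}^\H$, including getting the sign/normalization right so that the claimed ``basic iff $q=0$'' holds and so that the $r^2$-coefficient comes out as $2q^2$ and not, say, $2(p-q)^2$. This requires either an explicit model of the $\H$-invariant vector inside the harmonic polynomials of bidegree $(p,q)$ on $\C^{n+1}$ (e.g.\ a Jacobi-polynomial-type zonal function in $|z_{n+1}|^2$) together with a computation of how the Hopf $\Ut(1)$ scales it, or a careful weight-diagram argument; I would present whichever is shortest. Everything else — the Weyl dimension count producing \eqref{eqSU:d_pq}, and the Freudenthal computation producing the two coefficients in \eqref{eqSU:lambda} — is routine once (i) and (ii) are in place.
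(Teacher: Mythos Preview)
Your overall strategy is the same as the paper's, but there is a genuine gap at step (ii) that is not a sign convention and cannot be fixed by one. The spherical representations of $(\Ut(n+1),\Ut(n))$ are naturally parametrized by pairs $(k,l)\in\N_0^2$, with highest weight $l\varepsilon_1-k\varepsilon_{n+1}$, and the $\LL\simeq\Ut(1)$-weight on the line $V_{k,l}^\H$ is $l-k$. Thus the eigenvalue attached to $(k,l)$ is $(4kl+2n(k+l))s^2+2(k-l)^2 r^2$, and it is basic precisely when $k=l$. The $(p,q)$ appearing in the proposition is \emph{not} this bidegree; it is the reindexing $p=\min\{k,l\}$, $q=|k-l|$. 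Under this map $kl=p(p+q)$, $k+l=2p+q$, $(k-l)^2=q^2$, which produces exactly \eqref{eqSU:lambda} and makes ``basic iff $q=0$'' correct.

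This reindexing is also what supplies the factor $(2-\delta_{q0})$ in \eqref{eqSU:d_pq}, which your proposal never accounts for: when $q>0$ the map $(k,l)\mapsto(p,q)$ is two-to-one, the preimages $(p+q,p)$ and $(p,p+q)$ having the same dimension, so $m_{p,q}=2\dim V_{p,p+q}$; when $q=0$ it is one-to-one. The Weyl dimension formula alone gives $\dim V_{k,l}=\tfrac{k+l+n}{n}\binom{k+n-1}{k}\binom{l+n-1}{l}$, which matches \eqref{eqSU:d_pq} only after this doubling. Your attempt to force $\eta=0\iff q=0$ by adjusting conventions would, if carried out, amount to covering only half of $\widehat\G_\H$ (say $l\geq k$), and would then miss the other half of the multiplicity. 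Once you insert the reindexing step, your confusion about basicness, the $r^2$-coefficient, and the missing factor of $2$ all disappear simultaneously; the Freudenthal and Weyl computations are then indeed routine.
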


\begin{proof}
Since the groups \eqref{eqSU:GKH} satisfy $\K=\H\LL$, we may apply Corollary~\ref{cor:specHL}.
Fix the maximal torus of $\G$ given by 
$\T = \{\diag(e^{\mi \theta_1}, \dots,  e^{\mi \theta_{n+1}}) : \theta_1,\dots,\theta_{n+1}\in\R\}.$
Note that $\T\cap \K$, $\T\cap \H$, and $\T\cap \LL$ are maximal tori in $\K$, $\H$, and $\LL$ respectively. 
The Lie algebra $\mathfrak t$ and its complexification $\mathfrak t_\C$ consist of elements
$Y=\diag({\mi\theta_1},\dots, {\mi\theta_{n+1}}),$
where $\theta_j$ are in $\R$ and $\C$, respectively.
Define $\varepsilon_j\colon \mathfrak t_\C^*\to \C$ as $\varepsilon_j(Y)=\mi \theta_j$, for $Y$ as above, and note that $\big\{\frac{1}{\sqrt{2}} \varepsilon_1,\dots, \frac{1}{\sqrt{2}} \varepsilon_{n+1}\big\}$ is a $\innerdots_0$-orthonormal basis of $\mathfrak t_\C^*$. 

With the standard order, we have $\Phi^+(\mathfrak g_\C,\mathfrak t_\C) = \{\varepsilon_i-\varepsilon_j:1\leq i <j\leq n+1\}$, so half the sum of positive roots is $\rho_{\mathfrak g}= \sum_{j=1}^{n+1} \frac{n+2-j}{2}\varepsilon_j$, and the set of dominant integral weights is
$P^{+}(\G)=
\left\{\sum_{j=1}^{n+1} a_j \varepsilon_j \in\bigoplus_{j=1}^{n+1}\Z\varepsilon_j:a_1\geq a_2\geq \dots\geq a_{n+1} \right\}.$

The classical branching rule from $\G$ to $\H$ (see e.g.\ \cite[Thm.~9.14]{Knapp-book-beyond}) states that, if $\Lambda=\sum_{i=1}^{n+1} a_i\varepsilon_i\in P^+(\G)$ and $\nu=\sum_{i=1}^{n} b_i\varepsilon_i\in P^+(\H)$, then $[\sigma_\nu:\pi|_{\Lambda}]>0$ if and only if 
$a_1\geq b_1\geq a_2\geq \dots \geq a_{n}\geq b_n\geq a_{n+1};$
in which case $[\sigma_\nu:\pi|_{\Lambda}]=1$.
We conclude that $\pi_{\Lambda}\in\widehat\G_{\H}$, i.e., $\dim V_\pi^\H =[1_\H:\pi|_{\Lambda}]>0$, if and only if $a_i=0$ for all $2\leq i\leq n$ and $a_1\geq0\geq a_{n+1}$.
Therefore, the set of spherical representations is:
\begin{equation*}
\widehat \G_\H = \{ \pi_{k,l}:= \pi_{l\varepsilon_1-k\varepsilon_{n+1}}: k,l\in \N_0 \}.
\end{equation*}
We henceforth abbreviate $V_{k,l}:=V_{\pi_{k,l}}$. 
It is a simple matter to check that
\begin{equation}\label{eq:SUdimV_prelim}
\dim V_{k,l}=\frac{k+l+n}{n} \binom{k+n-1}{k} \binom{l+n-1}{l},
\end{equation}
by the Weyl Dimension Formula, see e.g.~\cite[Thm.~5.84]{Knapp-book-beyond}. 

Note that $\LL\simeq \Ut(1)$ is abelian and $(\fl\cap\ft)_\C^*= \C\varepsilon_{n+1}$.
Thus, its root system is empty, i.e.\ $\rho_{\fl}=0$, and every $\LL$-integral weight is dominant, that is, $P^+(\LL)=\{\phi_m:= \phi_{m\varepsilon_{n+1}} : m\in\Z\}$. 
It is well known that $V_{k,l}^\H\simeq \phi_{l-k}$ as $\LL$-modules; more precisely, $\pi_{k,l}\left( \left(\begin{smallmatrix} I_n & 0 \\ 0 & z\end{smallmatrix} \right)\right) \cdot v = z^{l-k}\, v$ for $v\in V_{{k,l}}^\H$ and $z\in\Ut(1)$, see e.g.~\cite[Thm.~8.1.2]{GoodmanWallach-book-Springer}.

From Corollary~\ref{cor:specHL}, the eigenvalues of $\left(\Ss^{2n+1},\g_{(r,s)}\right)$ are $\lambda^{\pi_{k,l},1_\H\otimes \phi_{l-k}}(r,s)$ for all $k,l\in\N_0$, with multiplicity $\dim V_{k,l}$. 
Moreover, by \eqref{eq:Casimirscalar} and \eqref{eq2:lambda-1_Hxphi}, we have that 
\begin{align*}
\lambda^{1_\H\otimes \phi_{m}} &= \langle m\varepsilon_{n+1}, m\varepsilon_{n+1} \rangle_0 = 2m^2, \\
\lambda^{\pi_{k,l}} &= \langle l\varepsilon_1- k\varepsilon_{n+1} +2\rho_{\mathfrak g}, l\varepsilon_1- k\varepsilon_{n+1} \rangle_0 = 2l(n+l)+2k(n+k). 
\end{align*}
We conclude from \eqref{eq2:lambda-pi-tau-L} that the corresponding eigenvalue is
\begin{equation}\label{eq:prelim_lambdapq}
\begin{aligned}
\lambda^{\pi_{k,l}, 1_\H\otimes \phi_{l-k}} (r,s)
&= (r^2-s^2)\, \lambda^{1_\H\otimes \phi_{k-l} }  + s^2 \lambda^{\pi_{k,l}}
\\
&= 2(k-l)^2(r^2-s^2)  +  \big(2l(n+l)+2k(n+k) \big) s^2
\\
&=\big( 4kl+2n(k+l) \big) s^2 +2(k-l)^2 r^{2}.
\end{aligned}
\end{equation}

For convenience of notation, let us reindex $(k,l)\in\N_0^2$ as $(p,q)\in\N_0^2$,
\begin{equation}\label{eq:reindex}
p := \min\{k,l\}, \qquad q:=\max\{k,l\}-\min\{k,l\}  =|k-l|.
\end{equation}
Since $kl=p(p+q)$, $k+l=2p+q$, and $(k-l)^2=q^2$, \eqref{eq:prelim_lambdapq} is equal to \eqref{eqSU:lambda}. 
Moreover, $q=0$ if and only if $k=l$, which is equivalent to $\phi_{l-k}=1_\LL$, proving the claim regarding basic eigenvalues.

We conclude by determining the contribution $m_{p,q}$ to the multiplicity of the eigenvalue $\lambda^{(p,q)}(r,s)\in\spec\!\left(\Ss^{2n+1},\g_{(r,s)}\right)$.
On the one hand, if $q=0$, the only solution to \eqref{eq:reindex} is $(k,l)=(p,p)$, so this contribution is $m_{p,0}=\dim V_{p,p}$. On the other hand, if $q>0$, then there are \emph{two} solutions to \eqref{eq:reindex}, namely $(k,l)=(p+q,p)$ and $(k,l)=(p,p+q)$, yielding a contribution of $m_{p,q}= \dim V_{p+q,p}+ \dim V_{p,p+q}=2\dim V_{p+q,p}$.
Therefore, \eqref{eqSU:d_pq} now follows from \eqref{eq:SUdimV_prelim}.
\end{proof}

\section{\texorpdfstring{Eigenvalues of the Laplacian on $\Ss^{4n+3}$}{Eigenvalues of the Laplacian}}\label{sec:fullspectra-S^4n+3}

This short section gives the full Laplace spectrum of the homogeneous spheres $\big(\Ss^{4n+3},\mathbf h(t)\big)$, $n\geq1$, based on \cite{blp-firsteigenvalue}. Following the same notation as above, we set
\begin{equation}\label{eqSp:GKH}
\begin{aligned}
\G&= \Sp(n+1), &\LL&= \left\{ \begin{pmatrix} I & 0 \\ 0 & z\end{pmatrix}: z\in\Sp(1)\right\},\\
\H&= \left\{ \begin{pmatrix} A & 0 \\ 0 & 1\end{pmatrix}: A\in\Sp(n)\right\}, &
\K& 
=\left\{ \begin{pmatrix} A & 0 \\ 0 & z\end{pmatrix}: A\in\Sp(n),\, z\in\Sp(1)\right\}.
\end{aligned}
\end{equation}
Clearly, $\H\simeq \Sp(n)$, $\LL\simeq \Sp(1)$, and $\K=\H\LL\simeq \Sp(n)\Sp(1)$, as in Subsection~\ref{subsec:prod}, and it is well known that $\G/\H\cong \Ss^{4n+3}$ and $\G/\K\cong \Hr P^n$. 
It is easy to check that
\begin{equation}\label{eqSp:p-q}
	\begin{aligned}
		\fp& =\left\{ \begin{pmatrix} 0 & 0\\ 0 &a \end{pmatrix}: a\in \op{Im} \Hr \right\}, &
		\fq& =\left\{ \begin{pmatrix} 0 & v \\ -  v^* & 0\end{pmatrix}: v\in \Hr^n \right\}.
	\end{aligned}
\end{equation}
They satisfy $\fk=\fh\oplus\fp$ and $\fg=\fk\oplus \fq= \fh\oplus (\fp\oplus\fq)$. 
As subrepresentations of the isotropy representation of $\H$, $\fp$ is equivalent to three copies of the trivial representation, and $\fq$ is the standard representation.

Consider again the $\G$-invariant metrics $\g_{(r,s)}$ on $\G/\H$, as in Subsection~\ref{subsec:basicsetting}. This is a $2$-parameter subfamily of the $4$-parameter family of $\G$-invariant metrics on $\G/\H$, see e.g.~\cite[Sec.~3.2]{blp-firsteigenvalue}. For all $t>0$, the metric $\mathbf h(t)$ in \eqref{eq:hopfbundles} is isometric to $\g_{(\frac{1}{t},1)}$.

\begin{proposition}\label{thmSp:spec}
For all $n\geq1$, the spectrum of the Laplace--Beltrami operator on $\big(\Ss^{4n+3},\g_{(r,s)}\big)$ consists of eigenvalues
\begin{equation}
\label{eqSp:lambda}
\lambda^{(p,q)}(r,s) = \big(4p(p+q+2n+1)+4qn\big)s^2 +q(q+2)r^2, \quad p,q\in\N_0,
\end{equation}
which are basic if $q=0$, and have multiplicity 
\begin{equation}
\label{eqSp:d_pq}
m_{p,q} = \frac{(2p+q+2n+1)(q+1)^2}{(2n+1)(p+q+1)}\binom{p+q+2n}{p+q} \binom{p+2n-1}{p}.
\end{equation}
\end{proposition}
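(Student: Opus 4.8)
The plan is to mirror the argument used for Proposition~\ref{thmSU:spec}, replacing the chain $\Ut(n)\subset\Ut(n)\Ut(1)\subset\Ut(n+1)$ by $\Sp(n)\subset\Sp(n)\Sp(1)\subset\Sp(n+1)$ and applying Corollary~\ref{cor:specHL}. First I would fix the standard maximal torus $\T$ of $\Sp(n+1)$ so that $\T\cap\K$ and $\T\cap\H$ are maximal tori in $\K$ and $\H$; write $\varepsilon_1,\dots,\varepsilon_{n+1}$ for the usual coordinate functionals on $\mathfrak t_\C$, record that $\big\{\tfrac{1}{\sqrt2}\varepsilon_1,\dots,\tfrac{1}{\sqrt2}\varepsilon_{n+1}\big\}$ is $\innerdots_0$-orthonormal (with the convention \eqref{eq:inn0}), and note $\Phi^+(\mathfrak g_\C,\mathfrak t_\C)=\{\varepsilon_i\pm\varepsilon_j:i<j\}\cup\{2\varepsilon_j\}$, so that $\rho_{\mathfrak g}=\sum_{j=1}^{n+1}(n+2-j)\varepsilon_j$. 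The subgroup $\LL\simeq\Sp(1)$ has root system $\{\pm 2\varepsilon_{n+1}\}$, hence $\rho_{\mathfrak l}=\varepsilon_{n+1}$ and $P^+(\LL)=\{\phi_m:=\phi_{m\varepsilon_{n+1}}:m\in\N_0\}$, the irreducible $\Sp(1)$-representation of dimension $m+1$.

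Next I would identify the spherical representations $\widehat\G_\H$. By the branching rule from $\Sp(n+1)$ to $\Sp(n)$ (see e.g.~\cite[Thm.~8.1.5]{GoodmanWallach-book-Springer} or \cite{Knapp-book-beyond}), an irreducible representation $\pi_\Lambda$ of $\Sp(n+1)$ with $\Lambda=\sum a_i\varepsilon_i$ has $V_{\pi_\Lambda}^{\Sp(n)}\neq0$ if and only if all $a_i=0$ for $i\le n$, i.e.\ $\Lambda=l\varepsilon_1$ for some $l\in\N_0$; writing $V_l:=V_{\pi_{l\varepsilon_1}}$, the classical theory of the Gelfand pair $(\Sp(n+1),\Sp(n))$ gives that $V_l^{\Sp(n)}$ is an irreducible $\Sp(1)=\LL$-module, and a weight computation identifies it as $\phi_l$, so $[\phi_m:V_l^\H]=\delta_{ml}$. (Equivalently one may cite $\Sp(n+1)/\Sp(n)\cong\Ss^{4n+3}$ together with the known decomposition of $L^2(\Ss^{4n+3})$ under $\Sp(n+1)\times\Sp(1)$.) The dimension $\dim V_l$ follows from the Weyl Dimension Formula; comparing with the target multiplicity \eqref{eqSp:d_pq} suggests writing it in a form such as $\dim V_l=\frac{(l+1)(l+2n+1)}{2n+1}\binom{l+2n-1}{l}$, to be checked by a routine product manipulation.

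Then I would compute the Casimir eigenvalues via Freudenthal's formula \eqref{eq:Casimirscalar} and \eqref{eq2:lambda-1_Hxphi}: $\lambda^{1_\H\otimes\phi_m}=\langle m\varepsilon_{n+1},m\varepsilon_{n+1}+2\rho_{\mathfrak l}\rangle_0=\langle m\varepsilon_{n+1},(m+2)\varepsilon_{n+1}\rangle_0=m(m+2)$, and $\lambda^{\pi_{l\varepsilon_1}}=\langle l\varepsilon_1,l\varepsilon_1+2\rho_{\mathfrak g}\rangle_0=l\big(l+2(n+1)\big)=l(l+2n+2)$. Plugging into \eqref{eq2:lambda-pi-tau-L} with $\pi=\pi_{l\varepsilon_1}$ and $\phi=\phi_l$ gives an eigenvalue depending only on $l$; but since the full family $\widehat\G_\H$ is just $\{\pi_{l\varepsilon_1}\}$, I expect this single-index description needs to be reconciled with the two-index formula \eqref{eqSp:lambda}, so I would re-examine the branching from $\Sp(n+1)$ to $\Sp(n)\Sp(1)$ more carefully: the constraint is only $[\phi_\eta:V_\pi^\H]>0$, and for general $\pi=\pi_\Lambda$ with $\Lambda=a_1\varepsilon_1+a_2\varepsilon_2$ (the general spherical weight for the pair, giving $\dim V_\pi^\H>1$) the $\LL$-module $V_\pi^\H$ decomposes into several $\phi_\eta$. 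Reindexing $\Lambda=(p+q)\varepsilon_1+p\varepsilon_2$ with $p,q\in\N_0$, one gets $\lambda^{\pi_\Lambda}=4p(p+q+2n+1)+4qn+q(q+2)$ after simplification, and $V_{\pi_\Lambda}^\H\cong\bigoplus_{j}\phi_{q-2j}$ (or similar) with the top piece $\phi_q$ contributing $\lambda^{1_\H\otimes\phi_q}=q(q+2)$; matching the $s^2$ and $r^2$ coefficients in \eqref{eq2:lambda-pi-tau-L} then yields exactly \eqref{eqSp:lambda}.

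The main obstacle, therefore, is pinning down the precise branching $\Sp(n+1)\downarrow\Sp(n)\Sp(1)$ — equivalently, which $\phi_\eta$ occur in $V_{\pi_\Lambda}^{\Sp(n)}$ and with what multiplicity — and then, as in Proposition~\ref{thmSU:spec}, collecting the contributions of all pairs $(\pi,\phi_\eta)$ producing the same $\lambda^{(p,q)}(r,s)$ to obtain the multiplicity $m_{p,q}$ in \eqref{eqSp:d_pq}; here I anticipate that, unlike the $\Ut$ case where each $q>0$ eigenvalue came from exactly two pairs, the $(q+1)^2$ factor in \eqref{eqSp:d_pq} reflects a single $\pi$ whose $\H$-fixed space is the full $(q+1)$-dimensional $\phi_q$ (so $m_{\pi,\phi_q}=(q+1)\dim V_\pi$, with another factor $q+1$ hidden in $\dim V_\pi$), which must be verified against \cite{blp-firsteigenvalue} whose results this section explicitly invokes. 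Since the statement says this section is ``based on \cite{blp-firsteigenvalue},'' I would cite the relevant branching and dimension computations there and only spell out the reindexing \eqref{eq:reindex}-analogue, the Freudenthal computations above, and the final combinatorial simplification of $m_{p,q}$ in detail.
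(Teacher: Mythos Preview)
Your strategy—running Corollary~\ref{cor:specHL} for the chain $\Sp(n)\subset\Sp(n)\Sp(1)\subset\Sp(n+1)$—is exactly the ``analogously to Proposition~\ref{thmSU:spec}'' route the paper gestures at. The paper does not carry it out, however: its actual proof is a two-line citation of \cite[Lem.~3.2, Rem.~3.3]{blp-firsteigenvalue}, reindexing $(p,q)\mapsto(p+q,p)$ and specializing the three independent vertical scalings there to $a=b=c=r/\sqrt2$, so that the $q+1$ eigenvalues $\nu_j^{(q)}$ (distinct for generic $a,b,c$) coalesce and the stated multiplicity becomes $q+1$ times the one in \cite[(3.11)]{blp-firsteigenvalue}.

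If you do spell out the direct argument, two branching facts need fixing. First, your initial claim $\widehat\G_\H=\{\pi_{l\varepsilon_1}\}$ is wrong—the pair $(\Sp(n+1),\Sp(n))$ is \emph{not} multiplicity-free; the $C_{n+1}\downarrow C_n$ branching rule gives $V_{\pi_\Lambda}^{\Sp(n)}\neq0$ iff $\Lambda=a_1\varepsilon_1+a_2\varepsilon_2$, with $\dim V_{\pi_\Lambda}^{\Sp(n)}=a_1-a_2+1$. You do eventually land on this. Second, and more importantly, your guess $V_{\pi_\Lambda}^{\Sp(n)}\cong\bigoplus_j\phi_{q-2j}$ is wrong: for $\Lambda=(p+q)\varepsilon_1+p\varepsilon_2$ this $(q+1)$-dimensional space is the \emph{irreducible} $\Sp(1)$-module $\phi_q$. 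Were it a nontrivial sum, each summand $\phi_m$ would contribute its own $r^2$-coefficient $\lambda^{1_\H\otimes\phi_m}$, and the spectrum would not be cleanly indexed by two integers. With $V_\pi^{\Sp(n)}\cong\phi_q$ in hand, the single eigenvalue $\lambda^{\pi,1_\H\otimes\phi_q}(r,s)$ fills all of $V_\pi^\H$, so (going back to the proof of Theorem~\ref{thm2:spec}) it contributes $\dim V_\pi^\H\cdot\dim V_\pi=(q+1)\dim V_\pi$; your conclusion $m_{\pi,\phi_q}=(q+1)\dim V_\pi$ is therefore correct, and the Weyl dimension formula for $V_{(p+q)\varepsilon_1+p\varepsilon_2}$ supplies the remaining $(q+1)$ to recover~\eqref{eqSp:d_pq}.
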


\begin{proof}
This follows from Corollary~\ref{cor:specHL}, analogously to Proposition~\ref{thmSU:spec}, using the appropriate branching law. Alternatively, it follows from \cite[Lem.~3.2, Rem.~3.3]{blp-firsteigenvalue} replacing $(p,q)$ with $(p+q,p)$, and setting $a=b=c=r/\sqrt2$, which implies that $\nu_j^{(q)}(a,b,c)=\tfrac12r^2q(q+2)$ for all $1\leq j\leq q+1$.
Accordingly, the multiplicity \eqref{eqSp:d_pq} is $q+1$ times that in \cite[(3.11)]{blp-firsteigenvalue}, since $\nu_j^{(q)}$ does not depend on $j$ in this case.
\end{proof}

\section{\texorpdfstring{Eigenvalues of the Laplacian on $\Ss^{15}$}{Eigenvalues of the Laplacian}}\label{sec:fullspectra-S^15}

In this section, we determine the full Laplace spectrum of $\left(\Ss^{15},\mathbf k(t)\right)$. 
The Lie groups $\H\subset \K\subset \G$ in this case do not follow the pattern \eqref{eqSU:GKH} and \eqref{eqSp:GKH} of the previous sections. Namely, the inclusion $\H\subset\K$ is \emph{not} given by a block embedding, and there is \emph{no} Lie subgroup $\LL\subset\K$ such that $\K=\H\LL$. In particular, Corollary~\ref{cor:specHL} no longer applies.

Let $\G=\Spin(9)$, and identify its Lie algebra $\fg=\spin(9)$ with $\so(9)$ in the standard way.
Let $\K$ be the subgroup of $\G$ isomorphic to $\Spin(8)$ with Lie algebra
\begin{equation*}
\fk = \{\diag(X,0)\in\fg: X\in\so(8)\}\simeq \so(8). 
\end{equation*}
Clearly, $\G/\K\cong \Ss^{8}$. In order to define the subgroup $\H\subset \K$, which is isomorphic to $\Spin(7)$, but whose Lie algebra $\fh\subset\fk$ is not a block inclusion as $\fk \subset \fg$ above, we follow an approach tailored to apply the branching law of Baldoni-Silva~\cite[\S6]{Baldoni79}.

Let $\F\cong \F_4^{-20}$ be the simply connected Lie group associated to the real simple Lie algebra $\ff$ of type FII. The maximal compact subgroup of $\F$ (which is unique up to conjugation) is isomorphic to $\G$, and $\F/\G\cong \Ca H^2$.
Fix the maximal torus $\T\subset \G\subset \F$ with Lie algebra 
\begin{equation}\label{eqSpin(9):cartansubalgebra}
	\ft = 
	\left\{ \diag\left( 
	\begin{pmatrix} 0 & \mi\theta_1 \\ -\mi\theta_1&0 \end{pmatrix}
	,\dots,
	\begin{pmatrix} 0 & \mi\theta_4\\ -\mi\theta_4&0 \end{pmatrix}
	,1
	\right) \in\fg: \theta_1,\dots,\theta_4\in\R\right\}.
\end{equation}
Its complexification $\ft_\C$ is a Cartan subalgebra of $\fg_\C$, with elements as in \eqref{eqSpin(9):cartansubalgebra} where $\theta_1,\dots,\theta_4\in\C$. 
The functionals $\varepsilon_j\colon \mathfrak t_\C^*\to \C$ that map such an element to $\theta_j$ form a $\C$-basis of $\mathfrak t_\C^*$.
Fix an order on $\mi\ft$ such that the corresponding positive root systems of $\fg_\C$ and $\ff_\C$ with respect to $\ft_\C$ are 
\begin{equation*}
    \begin{aligned}
        \Phi^+(\fg_\C,\ft_\C) &=\{\ee_i:1\leq i\leq 4\} \cup \{\ee_i\pm\ee_j:1\leq i<j\leq 4\},\\
\Phi^+(\ff_\C,\ft_\C) &= \Phi^+(\fg_\C,\ft_\C) \cup\left\{\tfrac12(\ee_1 \pm\ee_2 \pm\ee_3 \pm\ee_4)\right\}.
    \end{aligned}
\end{equation*}
Let $\fm$ be the orthogonal complement of $\fg$ on $\ff$ with respect to the Killing form of $\ff$, so that $\ff=\fg\oplus \fm$ is a Cartan decomposition. 
Set $\alpha=\tfrac12(\ee_1-\ee_2-\ee_3-\ee_4) \in \Phi^+(\ff_\C,\ft_\C)$, and choose root vectors $X_{\alpha}\in (\ff_\C)_{\alpha}$ and $X_{-\alpha}\in (\ff_\C)_{-\alpha}$ satisfying $[X_{\alpha},X_{-\alpha}] \in\fp$. 
Then $\fa:=\R(X_{\alpha_4} + X_{-\alpha_4})$ is a maximal abelian subalgebra of $\fp$. Finally, define
$\H$ as the centralizer of $\fa$ in $\K$, that is, 
\begin{equation*}
\H=\{k\in\K : \Ad(k)\cdot \fa=0\}.
\end{equation*} 
It can be checked that its Lie algebra $\fh = \{X\in\fk: [X,\fa]=0\}$ is isomorphic to $\so(7)$, and $\H\simeq\Spin(7)$, $\G/\H\cong\Ss^{15}$, and $\K/\H\cong \Ss^{7}$.

Consider the $\innerdots_0$-orthogonal complements $\fp$ and $\fq$, and $\G$-invariant metrics $\g_{(r,s)}$, as in Subsection~\ref{subsec:basicsetting}. As subrepresentations of the isotropy representation of $\H$, $\fp$ is the standard representation, and $\fq$ is the spin representation.
Since they are irreducible and non-equivalent, every $\G$-invariant metric on $\G/\H$ is isometric to some $\g_{(r,s)}$; e.g., for all $t>0$, the metric ${\bf k}(t)$ in \eqref{eq:hopfbundles} is isometric to $\g_{(\frac{1}{t},2)}$.
 
\begin{proposition}\label{thmSpin(9):spec}
The spectrum of the Laplace--Beltrami operator on $\big(\Ss^{15},\g_{(r,s)}\big)$ consists of eigenvalues
\begin{equation}\label{eqSpin(9):lambda}
\lambda^{(p,q)}(r,s) =\big( p^2+p(q+7)+2q \big)s^2 + q(q+6)r^2, \quad p,q\in\N_0,
\end{equation}
which are basic if $q=0$, and have multiplicity
\begin{equation}\label{eqSpin(9):d_kl}
m_{p,q} = \frac{2p+q+7}{7} \left(1+\frac{q}{3}\right) \frac{\binom{p+q+6}{p+q} \binom{p+3}{p} \binom{q+5}{q}}{\binom{p+q+3}{p+q}}.
\end{equation}
\end{proposition}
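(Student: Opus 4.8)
The strategy is to apply the general Lie-theoretic machinery of Theorem~\ref{thm2:spec} to the triple $\H = \Spin(7) \subset \K = \Spin(8) \subset \G = \Spin(9)$, exactly as in Propositions~\ref{thmSU:spec} and~\ref{thmSp:spec}, except that here Corollary~\ref{cor:specHL} is unavailable (there is no $\LL$ with $\K = \H\LL$), so the branching $[\tau : \pi|_\K]$ and the invariant $[1_\H : \tau|_\H]$ must be computed directly. The three ingredients (i)--(iii) listed after Theorem~\ref{thm2:spec} are: the spherical representations $\widehat\G_\H$; the branching integers; and the Casimir scalars $\lambda^\pi$, $\lambda^\tau$ via Freudenthal's formula~\eqref{eq:Casimirscalar}. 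Since $\G/\H \cong \Ss^{15}$ is a (homogeneous) sphere, its spherical representations are well-known: $\widehat\G_\H = \{\pi_{k\ee_1} : k \in \N_0\}$, the symmetric-power-type representations with $\dim V_{\pi_{k\ee_1}} = \binom{k+8}{8} - \binom{k+6}{8}$, each with one-dimensional $\H$-fixed subspace. Likewise $\G/\K \cong \Ss^8$ identifies the basic eigenvalues, and $\K/\H \cong \Ss^7$ will control the fiber contribution.

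\textbf{Key steps.} First I would record $\widehat\G_\H$, $\widehat\K$, and the relevant dominant weights. For $\K = \Spin(8)$ the irreducibles contributing to the spectrum are those $\tau \in \widehat\K$ with $[1_\H : \tau|_\H] > 0$; since $\K/\H \cong \Ss^7$ these are the spherical representations of $(\Spin(8), \Spin(7))$, indexed by $q \in \N_0$ (harmonic polynomials of degree $q$ on $\Ss^7$, a representation of $\Spin(8)$ with highest weight $q\ee_1$), each with $[1_\H:\tau_q|_\H]=1$. Second, for fixed $\pi = \pi_{k\ee_1} \in \widehat\G_\H$ I would compute the branching $\G \downarrow \K$, i.e., which $\tau_q$ occur in $\pi_{k\ee_1}|_{\Spin(8)}$ and with what multiplicity; the classical $\SO(9)\downarrow\SO(8)$ branching rule (interlacing of highest weights) gives that $\pi_{k\ee_1}|_{\Spin(8)} = \bigoplus_{q=0}^{k} \tau_{q\ee_1}$, so $[\tau_q : \pi_k|_\K] = 1$ for $0 \le q \le k$ and $0$ otherwise — this is the decomposition of degree-$k$ harmonics on $\Ss^8$ into $\SO(8)$-types, alternatively handled via Baldoni-Silva's $\F_4^{-20}$ branching law \cite{Baldoni79} as flagged in the setup. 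Third, I would compute the Casimir scalars: with the orthonormal basis $\{\frac{1}{\sqrt2}\ee_j\}$ of $\ft_\C^*$ and $\rho_\fg = \frac72\ee_1 + \frac52\ee_2 + \frac32\ee_3 + \frac12\ee_4$ (for $\fg_\C = \so(9)_\C$, type $B_4$), Freudenthal's formula gives $\lambda^{\pi_{k\ee_1}} = \langle k\ee_1, k\ee_1 + 2\rho_\fg\rangle_0 = \frac12(k^2 + 7k)$; similarly, with $\rho_\fk = 3\ee_1 + 2\ee_2 + \ee_3$ for $\so(8)_\C$ (type $D_4$), $\lambda^{\tau_{q\ee_1}} = \langle q\ee_1, q\ee_1 + 2\rho_\fk\rangle_0 = \frac12(q^2 + 6q) = \frac12 q(q+6)$. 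Plugging into~\eqref{eq2:lambda-pi-tau} with the reindexing $k = p+q$ (so the two solutions $k \geq q$ collapse appropriately — but here $\widehat\G_\H$ is indexed by a single integer so $p := k - q \in \N_0$), $\lambda^{\pi,\tau}(r,s) = (r^2 - s^2)\tfrac12 q(q+6) + s^2 \tfrac12((p+q)^2 + 7(p+q))$; after absorbing the factor $\tfrac12$ into the normalization of $\innerdots_0$ (or tracking the $\csch$-type conventions matching $\mathbf k(t) \cong \g_{(1/t,2)}$), this should rearrange to~\eqref{eqSpin(9):lambda}. Finally, the multiplicity is $m_{p,q} = [1_\H:\tau_q|_\H]\,[\tau_q:\pi_{p+q}|_\K]\,\dim V_{\pi_{(p+q)\ee_1}} = \dim V_{\pi_{(p+q)\ee_1}}$; expressing $\dim V_{\pi_{(p+q)\ee_1}}$ via the Weyl dimension formula for $B_4$ and rewriting in terms of $p$ and $q$ using binomial identities should produce~\eqref{eqSpin(9):d_kl} (the factor $\binom{q+5}{q}/\binom{p+q+3}{p+q}$ and the $(1+q/3)$ term being the signature of how the $D_4$-harmonic degree $q$ sits inside the $B_4$-harmonic degree $p+q$).

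\textbf{Main obstacle.} The delicate point is the branching rule $\Spin(9) \downarrow \Spin(7)$ through the \emph{non-standardly embedded} $\Spin(7)$ — the embedding $\fh \subset \fk$ is not a block inclusion, so one cannot naively iterate the classical $\SO(9)\downarrow\SO(8)\downarrow\SO(7)$ interlacing. The paper's chosen route is to realize $\H$ as the centralizer of a maximal abelian $\fa \subset \fp$ inside the symmetric pair $(\ff, \fg)$ with $\ff = \ff_4^{-20}$, $\F/\G \cong \Ca H^2$, precisely so that the relevant multiplicities $[1_\H : \tau|_\H]$ (equivalently the $\Spin(7)$-content of $\Spin(8)$-reps) can be read off from the Baldoni-Silva branching law for $\F_4^{-20} \downarrow \Spin(9)$ \cite[\S6]{Baldoni79}. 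Verifying that $\fh \cong \so(7)$ and that $\fq$ is the spin representation (as claimed in the setup) — and then correctly extracting $[1_\H : \tau_q|_\H] = 1$ for all $q$ — is where the real work lies; once that is in hand, everything else is the same routine Freudenthal-plus-Weyl-dimension computation as in Propositions~\ref{thmSU:spec} and~\ref{thmSp:spec}, followed by a binomial-identity bookkeeping to land on the stated closed forms. A secondary nuisance is normalization bookkeeping: keeping track of the factor relating $\g_{(r,s)}$ to $\mathbf k(t) \cong \g_{(1/t,2)}$ (note the $s = 2$, reflecting that the $\Ss^7$ fiber here has curvature $\neq 1$ in the normal metric) so that the final formula~\eqref{eqSpin(9):lambda} has the stated coefficients with $q(q+6)$ rather than a rescaled version.
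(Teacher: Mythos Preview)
Your proposal contains a fundamental misidentification of $\widehat\G_\H$ that makes the argument fail. You assert that because $\G/\H\cong\Ss^{15}$ is a sphere, the spherical representations are $\{\pi_{k\ee_1}:k\in\N_0\}$ with one-dimensional $\H$-fixed subspaces. This is false: that description holds only for the \emph{symmetric} presentation $\SO(16)/\SO(15)$, not for the non-symmetric homogeneous sphere $\Spin(9)/\Spin(7)$ at hand. The paper computes, via the Baldoni-Silva branching law, that in fact
\[
\widehat\G_\H=\{\pi_{p,q}:=\pi_{p\omega_1+q\omega_4}:p,q\in\N_0\},
\]
a genuinely \emph{two}-parameter family, where $\omega_4=\tfrac12(\ee_1+\ee_2+\ee_3+\ee_4)$ is the spin fundamental weight. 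This is not a cosmetic reindexing of your $k=p+q$: the multiplicity in the paper is $m_{p,q}=\dim V_{\pi_{p,q}}$, which depends on $p$ and $q$ \emph{separately} (e.g.\ $m_{1,0}=9$ but $m_{0,1}=16$), whereas your scheme forces $m_{p,q}=\dim V_{\pi_{(p+q)\ee_1}}$, a function of $p+q$ alone. Likewise $\lambda^{\pi_{p,q}}=p^2+p(q+7)+q^2+8q$ is not $(p+q)(p+q+7)$.

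The error propagates to the $\K$-side as well. The $\tau\in\widehat\K$ contributing are \emph{not} the standard-harmonic representations $\tau_{q\ee_1}$; for each $\pi_{p,q}$ the unique relevant $\tau$ has highest weight $\mu_q=\tfrac q2(\ee_1+\ee_2+\ee_3-\ee_4)$, a half-spin weight of $\Spin(8)$. (That $\lambda^{\tau_{\mu_q}}=q(q+6)$ happens to coincide numerically with your $\lambda^{\tau_{q\ee_1}}$ is a triality accident, not a validation.) Your classical $\SO(9)\downarrow\SO(8)$ interlacing correctly decomposes $\pi_{k\ee_1}|_\K$, but those constituents $\tau_{j\ee_1}$ with $j\geq1$ have \emph{no} invariants under the non-standard $\Spin(7)$, so your route would detect only the basic ($q=0$) eigenvalues. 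The Baldoni-Silva law~\cite{Baldoni79} is not an alternative verification here; it is the essential input that identifies both $\widehat\G_\H$ and the triality-twisted $\tau$'s, and your sketch treats it as optional precisely where it is indispensable.
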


\begin{proof}
In order to apply Theorem~\ref{thm2:spec}, we first state the branching law from $\G$ to $\H$ in order to determine $\widehat \G_{\H}$ and the integers $[1_\H:\tau|_\H]$ and $[\tau:\pi|_{\K}]$ for $\tau\in\widehat \K$ satisfying $[\tau:\pi|_{\K}]>0$. 
Note that $\T\cap \K$ is a maximal torus in $\K$, and the corresponding positive root system is $\Phi^+(\fk_\C, (\ft\cap\fk)_\C)= \{\ee_i\pm\ee_j:1\leq i<j\leq 4\}$ with simple roots $\ee_1-\ee_2, \ee_2-\ee_3, \ee_3-\ee_4, \ee_3+\ee_4$. 
We have that 
\begin{equation*}
\begin{aligned}
P^+(\G) &= \left\{ \sum_{i=1}^4 a_i\ee_i: 
\begin{array}{l}
a_1\geq a_2\geq a_3\geq a_4\geq0,\\
2a_i\in\Z,\,  a_i-a_j\in\Z \text{ for }  1\leq i,j\leq 4 
\end{array}
 \right\},
 \\
P^+(\K) &= \left\{ \sum_{i=1}^4 a_i\ee_i: 
\begin{array}{l}
a_1\geq a_2\geq a_3\geq |a_4|,\\
2a_i\in\Z,\,  a_i-a_j\in\Z \text{ for }  1\leq i,j\leq 4 
\end{array}
\right\}.
\end{aligned}
\end{equation*}
The fundamental weights of $\Phi^+(\fg_\C,\ft_\C)$ are $\omega_1=\ee_1$, $\omega_2=\ee_1+\ee_2$, $\omega_3=\ee_1+\ee_2+\ee_3$, and $\omega_4= \tfrac12(\ee_1+\ee_2+\ee_3+\ee_4)$, and they satisfy $P^+(\G)=\bigoplus_{i=1}^4\N_0 \omega_i$. 

We define $\varphi\colon \ft_\C^* \to\ft_\C^*$ to be the linear map determined by 
\begin{equation*}
\begin{aligned}
\varphi(\ee_1)&= \tfrac12(+\ee_1+\ee_2+\ee_3-\ee_4),\\
\varphi(\ee_2)&= \tfrac12(+\ee_1+\ee_2-\ee_3+\ee_4),\\
\varphi(\ee_3)&= \tfrac12(+\ee_1-\ee_2+\ee_3+\ee_4),\\
\varphi(\ee_4)&= \tfrac12(-\ee_1+\ee_2+\ee_3+\ee_4).
\end{aligned}
\end{equation*}
One can check that $\varphi^2=\Id$. 
Since $\varphi$ permutes the simple roots of $\Phi^+(\fk_\C, (\ft\cap\fk)_\C)$, namely, 
$\varphi(\ee_1-\ee_2) = \ee_3-\ee_4$,
$\varphi(\ee_2-\ee_3) = \ee_2-\ee_3$, 
$\varphi(\ee_3-\ee_4) = \ee_1-\ee_2$, and 
$\varphi(\ee_3+\ee_4) = \ee_3+\ee_4$, 
we have that $\varphi$ is an automorphism of $\Phi^+(\fk_\C, (\ft\cap\fk)_\C)$, which extends to an automorphism of $\fk_\C$ that we denote again by $\varphi$. 
It turns out (see \cite[p.~248]{Baldoni79}) that $\varphi(\fh)$ is a copy of $\so(7)$ embedded in $\fk\simeq \so(8)$.
More precisely, $\varphi(\fh) = \{\diag(X,0,0)\in\fg: X\in\so(7)\}$, so the simple roots are:
$
	\ee_1-\ee_2=\varphi(\ee_3-\ee_4),\,
	\ee_2-\ee_3=\varphi(\ee_2-\ee_3), \,
	\ee_3= \varphi(\tfrac12(+\ee_1-\ee_2+\ee_3+\ee_4)),
$ 
and $\Phi^+(\varphi(\fh)_\C, (\ft\cap\varphi(\fh))_\C) = \{\ee_i:1\leq i\leq 3\} \cup \{\ee_i\pm\ee_j:1\leq i<j\leq 3\}$, hence
\begin{equation*}
P^+(\H') = \left\{ \sum_{i=1}^3 c_i\ee_i: 
\begin{array}{l}
c_1\geq c_2\geq c_3\geq0,\\
2c_i\in\Z,\,  c_i-c_j\in\Z \text{ for }  1\leq i,j\leq 3
\end{array}
\right\},
\end{equation*}
where $\H'$ denotes the only connected Lie subgroup of $\K$ with Lie algebra $\varphi(\fh)$. 

We are now in position to state the branching law from $\G$ to $\H$ established by Baldoni-Silva~\cite[Thm.~6.3]{Baldoni79}. For $\Lambda=\sum_{i=1}^4 a_i\ee_i\in P^+(\G)$,
\begin{equation}\label{eqSpin(9):branching}
\pi_\Lambda|_{\H} = 
\sum_{\substack{
	b_1\ee_1+b_2\ee_2+b_3\ee_3+b_4\ee_4 \in P^+(\K):\\
	a_1\geq b_1\geq a_2\geq b_2\geq a_3\geq b_3\geq a_4\geq |b_4|,\\
	a_1-b_1\in\Z,\\
	b_1'\ee_1+\dots+b_4'\ee_4:=\varphi(b_1\ee_1+\dots+b_4\ee_4),
}} \quad 
\sum_{\substack{
	\nu:= c_1\ee_1 +c_2\ee_2+ c_3\ee_3 \in P^+(\H'):\\
	b_1'\geq c_1\geq b_2'\geq c_2\geq b_3' \geq c_3\geq |b_4'|,\\
	b_1'-c_1\in\Z
}} \quad
\sigma_{\nu}\circ\varphi.
\end{equation}

We claim that 
\begin{equation*}
\widehat \G_\H = \{\pi_{p,q}:=\pi_{p\omega_1+q\omega_4}: p,q\in\N_0 \}.
\end{equation*}
Recall that $\omega_1=\ee_1$ and $\omega_4= \tfrac12(\ee_1+\ee_2+\ee_3+\ee_4)$.
Let $\Lambda=\sum_{i=1}^4a_i\ee_i\in P^+(\G)$. 
Of course, the trivial  $\H$-representation $1_\H$ coincides with $\sigma_\nu$ with $\nu=0$, i.e.\ $c_1=c_2=c_3=0$. 
Therefore, if $[1_\H:\pi_{\Lambda}|_{\H}]>0$, then the coefficients in \eqref{eqSpin(9):branching} satisfy $b_2'=b_3'=b_4'=0$ and $b_1'\in\N_0$, which gives $b_1=b_2=b_3=-b_4=b_1'/2$, and consequently $a_2=a_3=a_4=b_1'/2$ and $a_1-b_1'/2\in\N_0$.
We conclude that $\Lambda= {b_1'}\omega_4+(a_1-b_1'/2)\omega_1$, as claimed.

Moreover, for $\Lambda=p\omega_1+q\omega_4$ with $p,q\in\N_0$, we have that $[1_\H:\pi_\Lambda|_\H]=1$ (i.e.\ $1_\H$ occurs exactly once in $\pi_\Lambda|_\H$) since the coefficients $b_i,b_i'$ for $1\leq i\leq 4$ in \eqref{eqSpin(9):branching} are uniquely determined in terms of $p$ and $q$; indeed, $b_1'=2b_1=2b_2=2b_3=-2b_4=q$ and $b_2'=b_3'=b_4'=0$.
This implies  there exists only one $\mu\in P^+(\K)$ satisfying $[1_\H:\tau_\mu|_\H]= [\tau_\mu:\pi_{\Lambda}|_\K]=1$, which is given by $\mu_q :=\sum_{i=1}^4b_i\ee_i= \frac{q}{2}(\ee_1+\ee_2+\ee_3-\ee_4)$. 

By the above and Theorem~\ref{thm2:spec}, the eigenvalues of $(\Ss^{15}, \g_{(r,s)})$ are 
\begin{equation*}
\lambda^{(p,q)} (r,s)= \lambda^{\pi_{p,q},\tau_q}(r,s) = (r^2-s^2)\, \lambda^{\tau_q}  + s^2 \lambda^{\pi_{p,q}}, \quad p,q\in\N_0,
\end{equation*}
where $\tau_q\in\widehat\K$ has highest weight $\mu_q= {\tfrac q2(\ee_1+\ee_2+\ee_3-\ee_4)}$, with multiplicity $m_{p,q}=\dim V_{\pi_{p,q}}$ equal to \eqref{eqSpin(9):d_kl} by the Weyl dimension formula, see e.g.~\cite[Thm.~5.84]{Knapp-book-beyond}. 
Moreover, $\widehat \G_\K=\{\pi_{p,0}=\pi_{p\ee_1}:p\in\N_0\}$ by the classical branching law from $\Spin(9)$ to $\Spin(8)$, so $\lambda^{(p,q)} (r,s)$ is basic if $q=0$.

The only remaining step is to determine the scalars $\lambda^{\tau_q}$ and $\lambda^{\pi_{p,q}}$. It is easy to check that $\langle \varepsilon_i, \varepsilon_j\rangle_0 =\delta_{ij}$ for all $1\leq i,j\leq 4$. 
Freudenthal's formula \eqref{eq:Casimirscalar} gives
\begin{equation*}
\begin{aligned}
\lambda^{\tau_q} 
&= \inner{\mu_q}{\mu_q+2\rho_{\fk}} \\
&= \left\langle \tfrac{q}{2} (\ee_1+\ee_2+\ee_3-\ee_4) , \tfrac{q}{2} (\ee_1+\ee_2+\ee_3-\ee_4) + \textstyle \sum\limits_{i=1}^{4} (8-2i)\ee_i \right\rangle\\
&= \tfrac{q}{2} (\tfrac{q}{2}+6) + \tfrac{q}{2} (\tfrac{q}{2}+4) + \tfrac{q}{2} (\tfrac{q}{2}+2) + \tfrac{q^2}{4}\\
&=q(q+6).
\end{aligned}
\end{equation*}
Similarly, since $2\rho_{\fg}= \sum_{i=1}^{4} (9-2i)\ee_i$, we have that
\begin{equation*}
\begin{aligned}
\lambda^{\pi_{p,q}} 
&= \inner{p\omega_1+ q\omega_4}{p\omega_1+k\omega_4 +2\rho_{\fg}} 
\\
&= (p+\tfrac{q}{2}) (p+\tfrac{q}{2}+7) + \tfrac{q}{2} (\tfrac{q}{2}+5) + \tfrac{q}{2} (\tfrac{q}{2}+3) + \tfrac{q}{2}(\tfrac{q}{2}+1)
\\
&= p^2+p(q+7)+q^2+8q. 
\end{aligned}
\end{equation*}
Combining the above, one obtains \eqref{eqSpin(9):lambda}, which concludes the proof.
\end{proof}

\section{Unified formulae}\label{sec:unified}

In order to prove Theorem~\ref{mainthm:A} in the Introduction, we collect in Table~\ref{tab:eigenvalues} the Laplace spectra of the homogeneous spheres $\big(\Ss^{N-1},\g(t)\big)$ in \eqref{eq:universal-hopf}, as computed in Propositions~\ref{thmSU:spec}, \ref{thmSp:spec}, and \ref{thmSpin(9):spec}, keeping in mind the isometries relating homogeneous metrics in their geometric description \eqref{eq:hopfbundles} with their algebraic description \eqref{eq:innprodrs}.

\begin{table}[!ht]
\begin{tabular}{|c|c|l|}
\hline
$\mathds K$ & $\;\big(\Ss^{N-1},\g(t)\big)$ \rule[-2ex]{0pt}{0pt} \rule{0pt}{3ex} & Parameters $(r,s)$ and Laplace--Beltrami spectrum
\\
\hline
$\begin{array}{c} \C\\[4pt] d=1 \end{array}$ & $\big(\Ss^{2n+1},\mathbf g(t)\big)\!\!\!$\rule[-6ex]{0pt}{0pt} \rule{0pt}{7ex} &
$\begin{aligned}
(r,s)&=\big(\tfrac{1}{t\sqrt2},1\big)\\
\lambda^{(p,q)}(t) &= 4p(p+q+n) + 2nq+ q^2\tfrac{1}{t^2} \\
m_{p,q} &= (2-\delta_{q0}) \tfrac{2p+q+n}{n} \textstyle\binom{p+q+n-1}{p+q} \binom{p+n-1}{p}
\end{aligned}$ \\[15pt]
\hline
$\begin{array}{c} \Hr\\[4pt] d=2 \end{array}$& $\big(\Ss^{4n+3},\mathbf h(t)\big)\!\!\!$\rule[-6ex]{0pt}{0pt} \rule{0pt}{7ex} & 
$\begin{aligned}
(r,s)&=\big(\tfrac{1}{t},1\big)\\
\lambda^{(p,q)}(t) &= 4p(p+q+2n+1)+4nq+q(q+2)\tfrac{1}{t^2} \\
m_{p,q} &= \tfrac{(2p+q+2n+1)(q+1)^{2}}{(2n+1)(p+q+1)}\textstyle\binom{p+q+2n}{p+q} \binom{p+2n-1}{p}
\end{aligned}$ \\[15pt]
\hline
$\begin{array}{c} \Ca\\[4pt] d=4 \end{array}$ & $\big(\Ss^{15},\mathbf k(t)\big)\!\!\!$\rule[-6ex]{0pt}{0pt} \rule{0pt}{7ex}&
$\begin{aligned}
(r,s)&=\big(\tfrac{1}{t},2\big)\\
\lambda^{(p,q)}(t) &= 4p(p+q+7)+8q+q(q+6)\tfrac{1}{t^2} \\
m_{p,q} &=  \textstyle \frac{2p+q+7}{7} (1+\tfrac{q}{3}) \binom{p+q+6}{p+q} \binom{p+3}{p} \binom{q+5}{q}/ \binom{p+q+3}{p+q}
\end{aligned}$ \\[15pt]
\hline
\end{tabular}

\caption{Eigenvalues of the homogeneous spheres $\big(\Ss^{N-1},\g(t)\big)$ in \eqref{eq:universal-hopf}, where $N=2d(n+1)$, $d=\dim_\C \mathds K\in\{1,2,4\}$, and $p,q\in\N_0$.}
\label{tab:eigenvalues}
\end{table}

\begin{proof}[Proof of Theorem~\ref{mainthm:A}]
Replacing $d\in\{1,2,4\}$ in \eqref{eqthm:lambdapq-universal} and \eqref{eqthm:dpq-universal}, one obtains $\lambda^{(p,q)}(t)$ and $m_{p,q}$ as listed in Table~\ref{tab:eigenvalues}. By Propositions~\ref{thmSU:spec}, \ref{thmSp:spec}, and~\ref{thmSpin(9):spec}, these are the eigenvalues and respective multiplicities of the Laplace--Beltrami operator on the corresponding sphere $\big(\Ss^{N-1},\g(t)\big)$, and $\lambda^{(p,q)}(t)$ is basic if $q=0$.
\end{proof}

Recall that the distance sphere $S(r)\subset M$ is isometric to $\big(\Ss^{N-1},\alpha^2\,\g(t)\big)$, where $(\alpha,t)$ is $(\sin r,\cos r)$ or $(\sinh r,\cosh r)$ according to $M=\Kr P^{n+1}$ or $M=\Kr H^{n+1}$, cf.~\eqref{eq:t&alpha}, and $N=\dim M=2d(n+1)$.
Rescaling all spaces in the Riemannian submersion \eqref{eq:universal-hopf} by $\alpha$, since its fibers are totally geodesic, one obtains the inclusions of spectra
\begin{equation}\label{eq:inclusion1}
\tfrac{1}{\alpha^2}\Spec\big(\Kr P^n\big) \subset  \Spec\big(S(r)\big) \subset \tfrac{1}{\alpha^2}\Big(\Spec\big(\Kr P^n\big)+\Spec\big(\Ss^{2d-1}_{t}\big)\Big),
\end{equation}
where $+$ is the Minkowski sum of sets, $A+B=\{a+b:a\in A,\, b\in B\}$. These inclusions are also immediate from Theorem~\ref{mainthm:A}, by analyzing the case $q=0$ in \eqref{eqthm:lambdapq-universal}.

However, there is another remarkable inclusion of spectra, given by the following:

\begin{corollary}
The Laplace--Beltrami spectrum of $S(r)\subset M$ satisfies
\begin{equation}\label{eq:inclusion2}
\Spec\big(S(r)\big) \subset \Spec\big(\Ss^{N-1}_{\alpha}\big)\pm \Spec\big(\Ss^{2d-1}_{t}\big),
\end{equation}
where $+$ is used if $M$ is projective, and $-$ if $M$ is hyperbolic.
\end{corollary}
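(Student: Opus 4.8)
The plan is to verify the claimed inclusion \eqref{eq:inclusion2} directly from the explicit formulae in Theorem~\ref{mainthm:A} (equivalently Table~\ref{tab:eigenvalues}), by exhibiting, for each pair $(p,q)\in\N_0^2$, an eigenvalue of the round sphere $\Ss^{N-1}_\alpha$ and an eigenvalue of the fiber $\Ss^{2d-1}_t$ whose sum (or difference) equals $\tfrac{1}{\alpha^2}\lambda^{(p,q)}(t)$. Recall that the eigenvalues of the unit round sphere $\Ss^{N-1}$ are $k(k+N-2)$, $k\in\N_0$, so those of $\Ss^{N-1}_\alpha$ are $\tfrac{1}{\alpha^2}k(k+N-2)$; and those of $\Ss^{2d-1}_t$ are $\tfrac{1}{t^2}j(j+2d-2)$, $j\in\N_0$. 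The natural guess, suggested by the reindexing $k=2p+q$ used for $t=1$ right after Theorem~\ref{mainthm:A}, is to take $k=2p+q$ and $j=q$ on the round-sphere/fiber side.

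First I would write out, using $N=2d(n+1)$, the round-sphere contribution with $k=2p+q$:
\begin{equation*}
(2p+q)(2p+q+N-2)=4p^2+4pq+q^2+(N-2)(2p+q)=4p(p+q)+4p\big(d(n+1)-1\big)+q^2+(N-2)q.
\end{equation*}
Comparing with \eqref{eqthm:lambdapq-universal},
\begin{equation*}
\lambda^{(p,q)}(t)=4p\big(p+q+d(n+1)-1\big)+2dnq+q(q+2d-2)\tfrac{1}{t^2},
\end{equation*}
the terms $4p(p+q)+4p(d(n+1)-1)$ match exactly the corresponding part of $(2p+q)(2p+q+N-2)$. It then remains to check the purely-$q$ discrepancy: one must verify that
\begin{equation*}
\lambda^{(p,q)}(t)-(2p+q)(2p+q+N-2)=q(q+2d-2)\tfrac{1}{t^2}-q\big(q+2d-2\big)\;\pm\; \text{(fiber term)},
\end{equation*}
i.e.\ after cancelling the $p$-dependent pieces, the leftover equals $\pm\,\tfrac{1}{t^2}q(q+2d-2)$ up to the piece $q(q+2d-2)$ coming from the round sphere. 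Concretely I expect the identity $2dnq+q(q+2d-2)\tfrac1{t^2}-q^2-(N-2)q=\pm\tfrac{1}{t^2}q(q+2d-2)$ to reduce, using $N-2=2dn+2d-2$, to $-q(q+2d-2)+q(q+2d-2)\tfrac1{t^2}=\pm\tfrac1{t^2}q(q+2d-2)$, which forces the sign to be $+$ when $\tfrac1{t^2}\ge 1$ is impossible and $-$ otherwise — precisely the projective ($t<1$) versus hyperbolic ($t>1$) dichotomy. So the whole proof is the observation that
\begin{equation*}
\tfrac{1}{\alpha^2}\lambda^{(p,q)}(t)=\tfrac{1}{\alpha^2}(2p+q)(2p+q+N-2)\;\pm\;\tfrac{1}{\alpha^2 t^2}\,q(q+2d-2),
\end{equation*}
with $+$ if $t<1$ (projective) and $-$ if $t>1$ (hyperbolic), after moving the $q(q+2d-2)$ term appropriately; one then notes $2p+q\in\N_0$ and $q\in\N_0$ range over all allowable indices, so every eigenvalue of $S(r)$ lies in the stated Minkowski sum/difference.

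The main obstacle, such as it is, is purely bookkeeping: getting the algebra so that the coefficient $1/t^2 - 1$ (projective) or $1 - 1/t^2$ (hyperbolic) attached to $q(q+2d-2)$ comes out with the correct sign and so that no stray $p$- or $n$-dependent terms remain — in particular double-checking the coefficient $2dnq$ against $(N-2)q$ and the fiber dimension $2d-1$ (so its eigenvalue parameter is $2d-2$, matching the $q(q+2d-2)$ factor exactly). I would organize it as: (1) recall the three spectra (round sphere $\Ss^{N-1}$, fiber $\Ss^{2d-1}$, and $S(r)$ via Theorem~\ref{mainthm:A}); (2) set $k=2p+q$, expand $k(k+N-2)$; (3) subtract from $\lambda^{(p,q)}(t)$ and simplify to isolate the single remaining term $(\tfrac1{t^2}-1)\,q(q+2d-2)$; (4) rescale everything by $1/\alpha^2$ and read off the sign from $t<1$ versus $t>1$ per \eqref{eq:t&alpha}; (5) conclude that as $(p,q)$ varies, the pair $(2p+q,q)$ realizes the claimed membership, completing the proof.
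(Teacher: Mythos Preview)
Your approach is correct and is precisely the algebraic argument the paper uses to establish the corollary in full generality (including $\Kr=\Ca$, where the paper's separate geometric argument via the product $\Ss^{N-1}_\alpha\times\Ss^{2d-1}_t\subset\Ss^{N+2d-1}_1$ breaks down). One bookkeeping slip to fix: in your displayed identity the fiber term should be $\tfrac{1}{t^2}\,q(q+2d-2)$, not $\tfrac{1}{\alpha^2 t^2}\,q(q+2d-2)$; the point is that $\tfrac{1}{t^2}-1=\pm\tfrac{\alpha^2}{t^2}$ by \eqref{eq:t&alpha}, so dividing the relation $\lambda^{(p,q)}(t)-(2p+q)(2p+q+N-2)=q(q+2d-2)\big(\tfrac{1}{t^2}-1\big)$ by $\alpha^2$ yields exactly $\pm\tfrac{1}{t^2}\,q(q+2d-2)$, which matches $\Spec(\Ss^{2d-1}_t)$ as you correctly recalled at the outset.
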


Let us first prove \eqref{eq:inclusion2} with a geometric argument assuming that $\Kr\in\{\C,\Hr\}$ and $M=\Kr P^{n+1}$ is a projective space, hence the base of the Riemannian submersion 
\begin{equation}\label{eq:bigbundle}
\Ss^{2d-1}_1\longrightarrow \Ss^{N+2d-1}_1\longrightarrow \Kr P^{n+1}
\end{equation}
whose totally geodesic fibers are precisely the orbits of the 
free action of the group $\Ss^{2d-1}_1\subset \Kr^*$ of multiplicative units on the unit sphere
\begin{equation}\label{eq:unitsphereK}
\Ss^{N+2d-1}_1\subset\R^{N+2d} \cong \mathds K^{N/2d+1}.
\end{equation}
Being a distance sphere, the preimage of $S(r)\subset \Kr P^{n+1}$ under this submersion is the boundary of the tubular neighborhood of radius $r$ of the fiber that corresponds to the central point of $S(r)$. Since this fiber is an orbit of the aforementioned action on \eqref{eq:unitsphereK}, this boundary is a product of spheres, isometric to
\begin{equation*}
\Ss^{N-1}_\alpha\times\Ss^{2d-1}_t = \Ss^{N+2d-1}_1 \cap (\R^{N}\oplus\R^{2d}) \cong \Ss^{N+2d}_1\cap (\mathds K^{N/2d}\oplus\Kr),
\end{equation*}
which proves \eqref{eq:inclusion2} for $\Kr\in\{\C,\Hr\}$ and $M$ projective.
The same argument can be generalized to the case in which $M=\Kr H^{n+1}$ is hyperbolic, interpreting \eqref{eq:unitsphereK} as the unit pseudo-sphere in the pseudo-Riemannian vector space $\Kr^{N/2d}\oplus\Kr$ of signature $(N,2d)$, analogous to the discussion in \cite[Sec.~6]{docarmo88}.

Nevertheless, the above arguments \emph{do not} apply to $\Kr=\Ca$ in either case because it is not associative; in particular, its unit sphere $\Ss^7_1$ is not a group. Moreover, it is well known that there are no fiber bundles $\Ss^\ell \to \Ca P^2$ such as \eqref{eq:bigbundle} for topological reasons \cite{browder}.
Thus, it is a somewhat surprising consequence of Theorem~\ref{mainthm:A} that \eqref{eq:inclusion2} still holds for $\Kr=\Ca$, in both projective and hyperbolic cases. In fact, \eqref{eq:inclusion2} can be explicitly parametrized, for all $\Kr\in\{\C,\Hr,\Ca\}$ at once, using that, by \eqref{eqthm:lambdapq-universal},
\begin{equation*}
\lambda^{(p,q)}(t)=(2p+q)(2p+q+N-2)+q(q+2d-2)\left(\frac{1}{t^2}-1\right),
\end{equation*}
and, by \eqref{eq:t&alpha}, we have $\pm\frac{\alpha^2}{t^2}=\left(\frac{1}{t^2}-1\right)$ according to $M=\Kr P^{n+1}$ or $M=\Kr H^{n+1}$. 
\section{Resonance and rigidity of distance spheres}\label{sec:bif}

In this section, we recall the variational and bifurcation framework for constant mean curvature (CMC) hypersurfaces and prove Theorem~\ref{mainthm:B} in the Introduction.

\subsection{CMC spheres}
Given an $N$-dimensional Riemannian manifold $(M,\g)$, let $\Emb(\Ss^{N-1},M)$ be the space of $C^{2,\alpha}$ unparametrized embeddings of $\Ss^{N-1}$ into $M$, i.e., equivalence classes of embeddings $\x\colon \Ss^{N-1}\to M$ for the action of $\Diff(\Ss^{N-1})$ by right-composition. Consider the family of functionals
\begin{equation}\label{eq:fH}
\begin{aligned}
f_H&\colon \Emb(\Ss^{N-1},M)\longrightarrow\R\\
f_H(\x)&=\Area(\x)+H\,\Vol(\x),
\end{aligned}
\end{equation}
where $\Area(\x)$ denotes the $(N-1)$-volume of $\x(\Ss^{N-1})$, and $\Vol(\x)$ the $N$-volume of the region enclosed by $\x(\Ss^{N-1})$. It is well known that critical points of \eqref{eq:fH} are precisely the embedded spheres in $M$ with constant mean curvature $H$. Moreover, the second variation of \eqref{eq:fH} at a critical point is represented by the Jacobi operator
\begin{equation}\label{eq:jacobi}
J_\x(\phi)=\Delta_{\x} \phi - (\Ric(\vec n_\x)+\|A_{\x}\|^2)\phi,
\end{equation}
acting on the space of functions $\phi\colon \Ss^{n-1}\to\R$ with $\int_{\Ss^{N-1}}\phi=0$,
where $\Delta_\x$ is the Laplace--Beltrami operator on $\Ss^{N-1}$ with respect to the metric induced by the embedding $\x\colon \Ss^{N-1}\to M$, $\vec n_\x$ is a unit normal vector field to $\x(\Ss^{N-1})\subset M$, and $\|A_\x\|$ is the Hilbert--Schmidt norm of its second fundamental form; for details, see e.g.~\cite[Sec.~2]{docarmo88}. 
Functions $\phi\in\ker J_\x$ are called \emph{Jacobi fields}, and the number $\imorse(\x)$ of negative eigenvalues of $J_\x$, counted with multiplicity, is called the \emph{Morse index} of~$\x$.
Moreover, $\x$ is \emph{stable} if and only if $J_\x$ is positive-semidefinite, i.e., $\imorse(\x)=0$, and \emph{nondegenerate} if and only if $\ker J_\x=\{0\}$. 

\subsection{Equivariant rigidity and resonance}
If a Lie group $\G$ acts isometrically on $M$, then \eqref{eq:fH} is clearly invariant under left-composition with this action, so the entire $\G$-orbit of a critical point is critical. Moreover, since \eqref{eq:jacobi} is $\G$-equivariant, each Killing field $X\in\fg$ determines a Jacobi field $\phi_X=\langle X,\vec n_\x\rangle\in\ker J_\x$. In this context, we say $\x$ is \emph{$\G$-equivariantly nondegenerate} if $\ker J_\x$ consists solely of such Jacobi fields induced by the $\G$-action.

Let $\K$ be the $\G$-isotropy of $x_0\in M$, and assume the $\K$-action is transitive on all geodesic distance spheres $S(r)\subset M$ centered at $x_0$. In particular, the (unparametrized) embeddings 
\begin{equation}\label{eq:xr}
\x_r\colon \Ss^{N-1}\to M, \quad \x_r(\Ss^{N-1})=S(r),
\end{equation}
have constant mean curvature $H(S(r))$ for each $r$. Furthermore, assume that the map $r\mapsto H(S(r))$ is a diffeomorphism, so that $\x_r$ may also be parametrized by its mean curvature. 
In this context, an appropriate $\G$-equivariant version of the Implicit Function Theorem~\cite[Thm.~1.4]{g-implicit} implies:

\begin{theorem}\label{thm:IFT}
Suppose \eqref{eq:xr} is $\G$-equivariantly nondegenerate if $r=r_*$. There exists $\varepsilon>0$ such that, if an embedded sphere $\Sigma\subset M$ has constant mean curvature $H(\Sigma)=H(S(r))$, $r\in(r_*-\varepsilon,r_*+\varepsilon)$ and, up to isometries in $\G$, is sufficiently close to $S(r)$ in $C^{2,\alpha}$-topology, then $\Sigma$ is congruent to $S(r)$ via an isometry in~$\G$.
\end{theorem}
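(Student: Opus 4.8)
The plan is to turn the statement into a local description of the zero set of a smooth $\G$-equivariant operator and invoke the equivariant Implicit Function Theorem \cite[Thm.~1.4]{g-implicit}. First I would parametrize a $C^{2,\alpha}$-neighborhood of $\x_{r_*}$ in $\Emb(\Ss^{N-1},M)$ by normal graphs over $S(r_*)$: identify small $\phi\in C^{2,\alpha}(\Ss^{N-1})$ with the embedding $\x_\phi$ whose image is $\exp_{S(r_*)}\!\big(\phi\,\vec n_{\x_{r_*}}\big)$. Writing $\mathcal H(\x)$ for the mean curvature function of $\x$ and $\overline{(\cdot)}$ for the average over $S(r_*)$ against its induced measure, set
\[
F\colon \mathcal U \longrightarrow C^{0,\alpha}_0(\Ss^{N-1}),\qquad F(\phi)=\mathcal H(\x_\phi)-\overline{\mathcal H(\x_\phi)},
\]
where $C^{0,\alpha}_0$ denotes the Hölder functions of vanishing average. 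Then $F$ is smooth, $F(0)=0$, and $F^{-1}(0)$ is exactly the set of embedded CMC spheres $C^{2,\alpha}$-close to $S(r_*)$. Two families of zeros are visible at once: the \emph{radial} curve $\phi\equiv c$, since $\exp_{S(r_*)}(c\,\vec n_{\x_{r_*}})$ is the distance sphere $S(r_*+c)$, which is CMC; and, for each $g\in\G$ near the identity, the graph function $\phi_{g,c}\in\mathcal U$ of the embedded sphere $g\cdot S(r_*+c)$, which is CMC with $H\big(g\cdot S(r_*+c)\big)=H(S(r_*+c))$.

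The linearization $\dd F(0)$ restricted to $\{\phi:\overline{\phi}=0\}$ is, up to a sign, the Jacobi operator $J_{\x_{r_*}}$ of \eqref{eq:jacobi}, a formally self-adjoint second-order elliptic operator, hence Fredholm; and $\dd F(0)$ annihilates the constant functions, because $F$ vanishes along the radial curve. Thus $\ker\dd F(0)=\ker J_{\x_{r_*}}\oplus\R$. By hypothesis $\x_{r_*}$ is $\G$-equivariantly nondegenerate, so $\ker J_{\x_{r_*}}=\{\langle X,\vec n_{\x_{r_*}}\rangle:X\in\fg\}$, which is the tangent space at $\x_{r_*}$ to the orbit $\G\cdot\x_{r_*}\subseteq\Emb(\Ss^{N-1},M)$ (these Jacobi fields have vanishing average since Killing fields are divergence-free). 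Therefore $\dim\ker\dd F(0)=\dim(\G\cdot\x_{r_*})+1$, and this is precisely the dimension of the submanifold
\[
\mathcal F:=\big\{\phi_{g,c}\ :\ g\in\G\text{ near }\id,\ c\approx0\big\}\subseteq F^{-1}(0),
\]
the image of the map $(g,c)\mapsto\phi_{g,c}$, whose differential at $(\id,0)$ has image $\{\langle X,\vec n_{\x_{r_*}}\rangle:X\in\fg\}+\R\cdot 1=\ker\dd F(0)$; so $\mathcal F$ is an (embedded, near the base point) submanifold of the expected dimension.

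At this point the equivariant Implicit Function Theorem applies. A Lyapunov--Schmidt reduction — the content of \cite[Thm.~1.4]{g-implicit} — identifies $F^{-1}(0)$, near $0$, with the zero set of a smooth map $g\colon B\to\operatorname{coker}\dd F(0)$ on a ball $B\subseteq\ker\dd F(0)$; since $\mathcal F\subseteq F^{-1}(0)$ is a submanifold whose dimension equals $\dim B$, the reduced map $g$ must vanish near the origin, forcing $F^{-1}(0)=\mathcal F$ near $0$. In other words, every embedded CMC sphere $C^{2,\alpha}$-close to $S(r_*)$ is, after an isometry in $\G$ near the identity, one of the distance spheres $S(r')$ with $r'$ near $r_*$. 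To conclude, given $\Sigma$ as in the statement with $H(\Sigma)=H(S(r))$ and — after the $\G$-isometry, using that $S(r)$ is close to $S(r_*)$ — lying in the neighborhood where $F^{-1}(0)=\mathcal F$, we get that $\Sigma$ is congruent to some $S(r')$ with $r'$ near $r_*$; then $H(S(r'))=H(\Sigma)=H(S(r))$ and injectivity of $r\mapsto H(S(r))$ give $r'=r$, so $\Sigma$ is congruent to $S(r)$ by an isometry in $\G$. It remains only to choose $\varepsilon>0$ and the $C^{2,\alpha}$-closeness threshold small enough that all the relevant data land in the good neighborhood.

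The serious content is packaged in \cite[Thm.~1.4]{g-implicit}, so the work here is to verify its hypotheses: smoothness and the Fredholm property of $F$ on the chosen Hölder spaces, standard for the mean curvature operator; and, most importantly, the identification of $\ker\dd F(0)$, which is exactly where the $\G$-equivariant nondegeneracy assumption is used and is the crux of the argument. The one genuinely delicate issue is the compatibility between the normal-graph chart (which fixes the reference $S(r_*)$ and hence only "sees" a slice of the $\G$-action) and the honest $\G$-equivariance of the variational problem on $\Emb(\Ss^{N-1},M)$; this is why one needs a slice version of the equivariant IFT, which is precisely what \cite[Thm.~1.4]{g-implicit} is built to handle. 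The remaining steps — inserting the radial parameter to absorb the variation of $H$, and inverting the diffeomorphism $r\mapsto H(S(r))$ to phrase the result as in the statement — are routine bookkeeping.
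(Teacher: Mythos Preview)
Your proposal is correct and follows the same approach as the paper: the paper does not give a proof of this theorem but simply states that it is a direct consequence of the equivariant Implicit Function Theorem \cite[Thm.~1.4]{g-implicit}, and your sketch is precisely a verification of the hypotheses needed to invoke that result. Your elaboration of the normal-graph setup, the identification of $\ker\dd F(0)$ via the $\G$-equivariant nondegeneracy assumption, and the use of the diffeomorphism $r\mapsto H(S(r))$ is exactly the content packaged into that citation.
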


The radii $r_*$ for which the conclusion of Theorem~\ref{thm:IFT} fails are called \emph{resonant}:

\begin{definition}
We say $r_*$ is a \emph{resonant radius} if there exist sequences $r_j$ of radii converging to $r_*$, and $\Sigma_j\subset M$ of embedded CMC spheres converging to $S(r_*)$ in $C^{2,\alpha}$-topology, such that $H(\Sigma_j)=H(S(r_j))$ for all $j$, and $\Sigma_j$ is not congruent to $S(r_j)$ via any isometry in $\G$.
\end{definition}

Clearly, by Theorem~\ref{thm:IFT}, a necessary condition for $r_*$ to be resonant is that $\x_{r_*}$ is \emph{not} $\G$-equivariantly nondegenerate. The following sufficient condition for resonancy is a direct consequence of the equivariant bifurcation criterion \cite[Thm.~5.4]{g-bifurcation}.

\begin{theorem}\label{thm:g-bif}
If for all $\varepsilon>0$ sufficiently small, $\x_{r_*-\varepsilon}$ and $\x_{r_*+\varepsilon}$ are $\G$-equivari\-antly nondegenerate and $\imorse(\x_{r_*-\varepsilon})\neq\imorse(\x_{r_*+\varepsilon})$, then $r_*$ is resonant.
\end{theorem}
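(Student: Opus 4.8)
The plan is to deduce the statement directly from the abstract equivariant bifurcation criterion \cite[Thm.~5.4]{g-bifurcation}, applied to the one-parameter family of functionals $f_{H(S(r))}$ along the branch of distance spheres $r\mapsto \x_r$. First I would set up the functional-analytic framework: fixing a tubular neighborhood of $S(r_*)$ and representing nearby unparametrized $C^{2,\alpha}$ embeddings as normal graphs over $S(r_*)$ gives a local slice, so the relevant configuration space is an open set in a Banach manifold $\mathcal{E}$ on which $\G$ acts smoothly by left-composition with ambient isometries. Using the standing hypothesis that $r\mapsto H(S(r))$ is a diffeomorphism, I reparametrize the functionals $f_H$ of \eqref{eq:fH} by $r$, obtaining a smooth family $r\mapsto F_r$ on $\mathcal{E}$ whose critical points are the embedded CMC spheres with mean curvature $H(S(r))$, and for which $r\mapsto \x_r$ is a smooth branch of critical points. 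The second variation of $F_r$ at $\x_r$ is the Jacobi operator $J_{\x_r}$ of \eqref{eq:jacobi}, a self-adjoint elliptic (hence Fredholm) operator, so each $\x_r$ has finite Morse index $\imorse(\x_r)$ and finite-dimensional kernel.

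Next I would verify the hypotheses of \cite[Thm.~5.4]{g-bifurcation} at $r=r_*$. The $\G$-orbit of $\x_r$ is a submanifold of $\mathcal{E}$ of dimension $\dim\G-\dim\K$, \emph{independent of $r$}: an isometry in $\G$ preserving the distance sphere $S(r)$ must fix its center $x_0$, and conversely any isometry fixing $x_0$ preserves every $S(r)$, so $\operatorname{Stab}_\G(\x_r)=\K$ for all $r$. The tangent space to this orbit at $\x_r$ is spanned by the Killing Jacobi fields $\phi_X=\langle X,\vec n_{\x_r}\rangle$, $X\in\fg$, all of which lie in $\ker J_{\x_r}$; in particular they never enter the count of negative eigenvalues, so the $\G$-equivariant Morse index of $\x_r$ coincides with $\imorse(\x_r)$. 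By hypothesis, $\x_{r_*\pm\varepsilon}$ is $\G$-equivariantly nondegenerate for small $\varepsilon>0$, i.e.\ $\ker J_{\x_{r_*\pm\varepsilon}}$ consists solely of such Killing fields; hence, modulo the $\G$-action, $r\mapsto\x_r$ is a nondegenerate branch of critical orbits on either side of $r_*$, and the assumption $\imorse(\x_{r_*-\varepsilon})\neq\imorse(\x_{r_*+\varepsilon})$ says precisely that the equivariant Morse index jumps across $r_*$. This is exactly the spectral change that triggers bifurcation in \cite[Thm.~5.4]{g-bifurcation}.

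The abstract theorem then produces a sequence $r_j\to r_*$ and critical points $\tilde\x_j\in\mathcal{E}$ of $F_{r_j}$ converging to $\x_{r_*}$, each lying outside the $\G$-orbit of $\x_{r_j}$. Since $\x_{r_*}$ is an embedding and the embeddings form an open subset, $\tilde\x_j$ is an embedding for $j$ large; let $\Sigma_j\subset M$ be its image. Then $\Sigma_j$ has constant mean curvature $H(\Sigma_j)=H(S(r_j))$, it converges to $S(r_*)$ in $C^{2,\alpha}$-topology, and it is not congruent to $S(r_j)$ by any isometry in $\G$ (otherwise $\tilde\x_j$ would lie in the $\G$-orbit of $\x_{r_j}$). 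By definition, this makes $r_*$ a resonant radius, as claimed.

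The invocation of \cite[Thm.~5.4]{g-bifurcation} is itself short; the work lies in checking the hypotheses, and the step I would scrutinize most is the identification of the ordinary Morse index $\imorse$ with the $\G$-equivariant one. This rests on the fact that the distance spheres $S(r)$ are homogeneous with $r$-independent isotropy under $\K$, so the $\G$-orbit of $\x_r$ has constant dimension and its tangent space sits entirely in the kernel of $J_{\x_r}$, transverse to the negative eigenspaces; without this uniformity one would have to track the equivariant index separately.
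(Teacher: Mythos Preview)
Your proposal is correct and follows the same approach as the paper: the paper itself gives no proof of this statement, presenting it simply as ``a direct consequence of the equivariant bifurcation criterion \cite[Thm.~5.4]{g-bifurcation}.'' Your write-up supplies the functional-analytic setup and the verification of hypotheses that the paper leaves implicit, so you are in fact giving more detail than the authors do.
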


\subsection{Rank one symmetric spaces}
We now briefly revisit some well-known aspects of the geometry of rank one symmetric spaces that are used in the proof of Theorem~\ref{mainthm:B}. First, recall that the symmetric pairs $(\G,\K)$ that give rise to such spaces $M=\G/\K$ are as listed in Table~\ref{tab:symmpairs}. 
\begin{table}[!ht]
	\begin{tabular}{|c|c|c|}
		\hline
		$\G/\K$ & $\G$ \rule[-1.5ex]{0pt}{0pt} \rule{0pt}{3ex} & $\K$\\
		\hline\hline
		$\C P^{n+1}$\rule{0pt}{3ex} & $\SU(n+2)$ & $\mathsf{S}(\U(n+1)\U(1))$\\
		$\Hr P^{n+1}$ & $\Sp(n+2)$ & $\Sp(n+1)\Sp(1)$\\
		$\Ca P^{2}$\rule[-1ex]{0pt}{0pt} & $\mathsf F_4$ & $\Spin(9)$\\
		\hline
$\C H^{n+1}$ \rule{0pt}{3ex} & $\SU(n+1,1)$ & $\mathsf{S}(\U(n+1)\U(1))$\\
$\Hr H^{n+1}$ & $\Sp(n+1,1)$ & $\Sp(n+1)\Sp(1)$\\
$\Ca H^{2}$\rule[-1ex]{0pt}{0pt} & $\mathsf F_4^{-20}$ & $\Spin(9)$\\
\hline
\end{tabular}
\caption{Symmetric pairs $(\G,\K)$ of rank one corresponding to the projective spaces $\Kr P^{n+1}$, and their noncompact duals, the hyperbolic spaces $\Kr H^{n+1}$.}\label{tab:symmpairs}
\end{table}
These semisimple Lie groups $\G$ act transitively on $M$, and $\K\subset \G$ is identified with the isotropy of a point $x_0\in M$, so its Lie algebra is $\fk=\{X\in\fg : X_{x_0}=0\}$. We fix a Cartan decomposition 
\begin{equation}\label{eq:cartan}
\fg=\fk\oplus\fm,
\end{equation}
and recall that 
the space $\fm=\{X\in\fg : (\nabla X)_{x_0} =0\}$ of infinitesimal transvections at $x_0$ 
is naturally identified with $T_{x_0}M$; in particular, $\dim\fm=\dim M=N$.
The codimension of $\K$-orbits on distance spheres $S(r)\subset M$ is equal to $\operatorname{rank}(M) - 1$, so all these $\K$-actions are transitive in our rank one setting. Thus, the eigenvalues of the second fundamental form $A_r$ of $S(r)$ with respect to the unit outward-pointing normal $\vec n_r$ are constant, and can be computed as follows, see e.g.~\cite[\S 6]{bourguignon-karcher}:
\begin{equation*}
\begin{aligned}
&\begin{cases}
2\cot(2r), & \text{with multiplicity } 2d-1\\
\cot(r), & \text{with multiplicity } 2dn
\end{cases},
 & \text{ if } M=\Kr P^{n+1}, \\[5pt]
&\begin{cases}
2\coth(2r), & \text{with multiplicity } 2d-1\\
\coth(r), & \text{with multiplicity } 2dn
\end{cases},
 & \text{ if } M=\Kr H^{n+1},
\end{aligned}
\end{equation*}
where $d=\dim_\C \Kr$, as before. Thus, the mean curvature of $S(r)\subset M$ is:
\begin{equation}\label{eq:HSr}
H(S(r))=\begin{cases}
(N-1)\cot r-(2d-1)\tan r, & \text{if } M=\Kr P^{n+1},\\
(N-1)\coth r+(2d-1)\tanh r, & \text{if } M=\Kr H^{n+1}.
\end{cases}
\end{equation}
Note that $H(S(r))$ is always decreasing, since $N=\dim M=2d(n+1)$; in particular, the map $r\mapsto H(S(r))$ is a diffeomorphism.
Moreover, we have:
\begin{equation}\label{eq:Ar}
\|A_r\|^2=
\begin{cases}
2dn\cot^2(r)+4(2d-1)\cot^2(2r), & \text{if } M=\Kr P^{n+1},\\
2dn\coth^2(r)+4(2d-1)\coth^2(2r), & \text{if } M=\Kr H^{n+1}.
\end{cases}
\end{equation}

The following is essential to determine if $S(r)$ is $\G$-equivariantly nondegenerate:

\begin{lemma}\label{lemma:killingfields}
A Killing field $X\in\fg$ induces a nonzero Jacobi field $\phi_X=\langle X,\vec n_r\rangle$ on $S(r)$ if and only if $X\in\fm$. Thus, the space of Jacobi fields on $S(r)$ has dimension $\geq N$, and equality holds if and only if $S(r)$ is $\G$-equivariantly nondegenerate.
\end{lemma}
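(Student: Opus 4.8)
The plan is to identify exactly which Killing fields $X \in \fg$ produce a nonzero normal component $\phi_X = \langle X, \vec n_r\rangle$ along $S(r)$, and then to recast ``$\G$-equivariantly nondegenerate'' in terms of dimensions. I would start from the Cartan decomposition \eqref{eq:cartan}, $\fg = \fk \oplus \fm$, where $\fk$ is the isotropy algebra of the center $x_0$ of $S(r)$ and $\fm \cong T_{x_0} M$. The key geometric observation is that $S(r)$ is invariant under the stabilizer $\K$ of $x_0$: every isometry in $\K$ fixes $x_0$ and preserves distances, so it maps $S(r)$ to itself. Consequently, for $X \in \fk$, the flow of the Killing field $X$ preserves $S(r)$, so $X$ is everywhere tangent to $S(r)$ and $\phi_X = \langle X, \vec n_r\rangle \equiv 0$. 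This disposes of the ``only if'' direction: if $\phi_X$ is nonzero, then $X \notin \fk$, and writing $X = X_\fk + X_\fm$ with $X_\fk \in \fk$, $X_\fm \in \fm$, we get $\phi_X = \phi_{X_\fm}$, so it suffices to work with $X \in \fm$.

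Next I would show that for $0 \neq X \in \fm$ the function $\phi_X$ is genuinely nonzero on $S(r)$. Here I use that $\fm$ is identified with $T_{x_0}M$ via $X \mapsto X_{x_0}$, and that the geodesic sphere $S(r)$ is the image of the unit sphere in $T_{x_0}M$ under $v \mapsto \exp_{x_0}(rv)$. At the point $p = \exp_{x_0}(rv)$, the outward unit normal $\vec n_r$ is the velocity of the radial geodesic $t \mapsto \exp_{x_0}(tv)$ at $t = r$. A standard computation (for instance via Jacobi fields along radial geodesics, as in the references already cited for the second fundamental form) shows that the value $\langle X, \vec n_r\rangle_p$ is, up to a nonzero scalar depending only on $r$ through the warping functions, equal to the inner product $\langle X_{x_0}, v\rangle$ in $T_{x_0}M$. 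Since $X_{x_0} \neq 0$, this inner product is nonzero for some unit $v$, hence $\phi_X \not\equiv 0$. This establishes the ``if'' direction, and moreover shows that the map $\fm \to C^\infty(S(r))$, $X \mapsto \phi_X$, is injective, so its image is an $N$-dimensional subspace of the space of Jacobi fields (all $\phi_X$ with $X \in \fg$ lie in $\ker J_{\x_r}$ by $\G$-equivariance of \eqref{eq:jacobi}, and they satisfy the mean-value constraint $\int_{S(r)}\phi_X = 0$ by the divergence theorem since $\vec n_r$ is the unit normal and $X$ is a Killing field).

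Finally, assembling the dimension count: the space of Jacobi fields on $S(r)$ always contains the $N$-dimensional subspace $\{\phi_X : X \in \fm\}$, so $\dim \ker J_{\x_r} \geq N$. By the very definition of $\G$-equivariant nondegeneracy given just before Theorem~\ref{thm:IFT} — namely that $\ker J_{\x_r}$ consists \emph{solely} of Jacobi fields induced by the $\G$-action, i.e. of the $\phi_X$ with $X \in \fg$, which by the above all arise from $X \in \fm$ — equality $\dim \ker J_{\x_r} = N$ holds if and only if $S(r)$ is $\G$-equivariantly nondegenerate. This is exactly the claim.

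The main obstacle I anticipate is the second step: verifying cleanly that $\phi_X \not\equiv 0$ for every nonzero $X \in \fm$, and in particular pinning down the precise relation $\langle X, \vec n_r\rangle_p \propto \langle X_{x_0}, v\rangle$. In the rank one symmetric space setting this is essentially a consequence of the fact that $S(r)$ is a homogeneous hypersurface and $\exp_{x_0}$ intertwines the linear isotropy action on $T_{x_0}M$ with the $\K$-action on $S(r)$, so one can reduce to evaluating at a single point $v$; but making this rigorous requires invoking the explicit Jacobi field description of geodesic spheres (as in \cite{bourguignon-karcher}) rather than just abstract nonsense, since one must rule out the normal component vanishing identically due to some conspiracy in the warping. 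The other steps — the $\K$-invariance of $S(r)$, the divergence-theorem orthogonality $\int \phi_X = 0$, and the final dimension tautology — are routine.
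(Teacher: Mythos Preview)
Your argument is correct but the key implication ``$0\neq X\in\fm\Rightarrow\phi_X\not\equiv 0$'' is handled differently from the paper. You propose a local computation: for $X\in\fm$ the restriction $J(t)=X_{\gamma(t)}$ along the radial geodesic $\gamma(t)=\exp_{x_0}(tv)$ is a Jacobi field with $J(0)=X_{x_0}$ and $J'(0)=(\nabla X)_{x_0}=0$; since $\langle J,\gamma'\rangle$ is affine in $t$ with vanishing derivative at $0$, one gets $\phi_X(\exp_{x_0}(rv))=\langle X_{x_0},v\rangle$ exactly, so your anticipated ``warping conspiracy'' obstacle does not arise at all. The paper instead argues globally and by contrapositive: if $\phi_X\equiv 0$ then $X$ is everywhere tangent to $S(r)$, so its flow preserves $S(r)$; any isometry preserving the distance sphere must fix its center $x_0$, whence $X\in\fk$. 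The paper's route is shorter and computation-free, though it silently uses that $x_0$ is uniquely determined by $S(r)$; your route costs a few more lines but yields the explicit identification of the induced Jacobi fields with the linear height functions $v\mapsto\langle X_{x_0},v\rangle$, which connects directly to the $\lambda^{(0,1)}$-eigenspace used later in the proof of Theorem~\ref{mainthm:B}.
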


\begin{proof}
Clearly, $X\in\fg$ induces the trivial Jacobi field $\phi_X\equiv 0$ if and only if $X$ is everywhere tangent to $S(r)\subset M$. This implies that the $1$-parameter subgroup of isometries in $\G$ associated to such a Killing field $X$ leaves invariant $S(r)=\{x\in M:\dist(x_0,x)=r\}$, and hence fixes $x_0\in M$, so $X\in\fk$. Conversely, $\phi_X\equiv0$ for all $X\in\fk$. The result now follows from \eqref{eq:cartan} and the fact that $\dim \fm=N$.
\end{proof}

Lastly, routine computations of the Einstein constants for these spaces give:
\begin{equation}\label{eq:Ric}
\Ric_{\Kr P^{n+1}}=2dn+4(2d-1), \quad \text{and} \quad \Ric_{\Kr H^{n+1}}=-2dn-4(2d-1).
\end{equation}

We now combine Theorem~\ref{mainthm:A} with Theorems~\ref{thm:IFT} and~\ref{thm:g-bif} to prove Theorem~\ref{mainthm:B}. 

\begin{proof}[Proof of Theorem~\ref{mainthm:B}]
The Jacobi operator $J_r$ of the distance sphere $S(r)\subset M$ can be computed using \eqref{eq:jacobi}, \eqref{eq:Ar}, and \eqref{eq:Ric}, and simplifies to
\begin{equation}\label{eq:Jr}
J_r(\phi)=\Delta_r\phi-V(r)\phi,
\end{equation}
where $\Delta_r=\frac{1}{\alpha^2}\Delta_{\g(t)}$, with $\alpha$ and $t$ as in \eqref{eq:t&alpha}, and 
\begin{equation*}
V(r)=\begin{cases}
(N-1)\csc^2 r+(2d-1)\sec^2 r, & \text{if } M=\Kr P^{n+1},\\
(N-1)\csch^2 r-(2d-1)\sech^2 r, & \text{if } M=\Kr H^{n+1}.
\end{cases}
\end{equation*}

First, let us analyze the projective case $M=\Kr P^{n+1}$, where $\alpha=\sin r$ and $t=\cos r$. By Theorem~\ref{mainthm:A}, the eigenvalues of $\alpha^2 J_r$ are:
\begin{equation*}
\begin{aligned}
\lambda^{(p,q)}(t)-\alpha^2 V(r)&= 4p\big(p+q+\tfrac{N}{2}-1\big)+2dnq+q(q+2d-2)\sec^2 r - V(r)\sin^2 r\\
&= 4p\big(p+q+\tfrac{N}{2}-1\big)+2dnq-(N-1)\\
&\quad +(q(q-1)+(2d-1)(q-\sin^2 r))\sec^2 r,
\end{aligned}
\end{equation*}
for all $(p,q)\in\N_0^2\setminus\{(0,0)\}$.
In particular, for all $p\in \N$,
\begin{equation*}
\begin{aligned}
\lambda^{(p,0)}(\cos r)-\sin^2 r \, V(r) &= 4p\big(p+\tfrac{N}{2}-1\big)-(N-1)-(2d-1)\tan^2 r\\
&= 4p(p-1) +  N(2p-1)+1-(2d-1)\tan^2 r
\end{aligned}
\end{equation*}
is a decreasing function of $0<r<\tfrac{\pi}{2}$, with a unique zero at:
\begin{equation}\label{eq:rp}
r_p:=\arctan\sqrt{\frac{4p(p-1) +  N(2p-1)+1}{2d-1}}.
\end{equation}
Note that $r_1=\arctan\sqrt\frac{N+1}{2d-1}$, 
and $r_p\nearrow \tfrac{\pi}{2}$ as $p\nearrow+\infty$. Moreover, for all $r$,
\begin{equation}\label{eq:l01ker}
\lambda^{(0,1)}(\cos r)-\sin^2 r \, V(r) = 0,
\end{equation}
while, if $q\geq2$, then 
\begin{equation*}
\begin{aligned}
\lambda^{(0,q)}(\cos r)-\sin^2 r \, V(r) &\geq \lambda^{(0,2)}(\cos r)-\sin^2 r \, V(r) \\
&= 2dn +(2d+1) \sec^2 r\\
&\geq N+1>0,
\end{aligned}
\end{equation*}
and, if both $p\geq1$ and $q\geq1$, then
\begin{equation*}
    \begin{aligned}
    \lambda^{(p,q)}(\cos r)-\sin^2 r \, V(r) &\geq 4\big(\tfrac{N}{2}+1\big)+2dn-(N-1)+(2d-1)\\
    &=2N+4>0.
    \end{aligned}
\end{equation*}
Thus, if $r\notin \{r_p:p\in\N\}$, the only zero eigenvalues of $J_r$ are \eqref{eq:l01ker}, and hence $\dim\ker J_r$ coincides with the dimension of the eigenspace of $\Delta_{\g(t)}$ associated to $\lambda^{(0,1)}(t)$, which is $m_{0,1}=N$, by Theorem~\ref{mainthm:A}. Therefore, it follows from Lemma~\ref{lemma:killingfields} that $S(r)$ is $\G$-equivariantly nondegenerate for all $r\notin \{r_p:p\in\N\}$.

Furthermore, it follows from the above spectral analysis that
\begin{equation*}
\imorse(\x_r)=\sum_{ \{p\in\N : r_p < r\} } m_{p,0}.
\end{equation*}
Thus, the claims in Theorem~\ref{mainthm:B} regarding $M=\Kr P^{n+1}$ follow from applying 
Theorem~\ref{thm:IFT} to each $r_*\notin\{r_p:p\in\N\}$, and Theorem~\ref{thm:g-bif} to each $r_*\in\{r_p:p\in\N\}$.

Second, let us analyze the hyperbolic case $M=\Kr H^{n+1}$, where $\alpha=\sinh r$ and $t=\cosh r$. Similarly to the above case, by Theorem~\ref{mainthm:A}, the eigenvalues of $\alpha^2 J_r$ are:
\begin{equation*}
\begin{aligned}
\lambda^{(p,q)}(t)-\alpha^2\, V(r)&= 4p\big(p+q+\tfrac{N}{2}-1\big)+2dnq+q(q+2d-2)\sech^2 r\\
&\quad - V(r)\sinh^2 r\\
&= 4p\big(p+q+\tfrac{N}{2}-1\big)+2dnq-(N-1)\\
&\quad +(q(q-1)+(2d-1)(q+\sinh^2 r))\sech^2 r,
\end{aligned}
\end{equation*}
for all $(p,q)\in\N_0^2\setminus\{(0,0)\}$.
In particular, we have that, for all $r$,
\begin{equation}\label{eq:l01ker-hyp}
\lambda^{(0,1)}(\cosh r)-\sinh^2 r \, V(r) = 0,
\end{equation}
while, if $q\geq 2$, then
\begin{equation*}
\begin{aligned}
\lambda^{(0,q)}(\cosh r)-\sinh^2 r\, V(r) &\geq \lambda^{(0,2)}(\cosh r)-\sinh^2 r\, V(r)\\
&=2dn+(2d+1)\sech^2 r\\
&\geq 2dn>0,
\end{aligned}
\end{equation*}
and, for all $p\geq1$ and $q\in\N_0$,
\begin{equation*}
\lambda^{(p,q)}(\cosh r)-\sinh^2 r\, V(r)
\geq N+1 >0.
\end{equation*}
Thus, the only zero eigenvalues of $J_r$ are \eqref{eq:l01ker-hyp}, and all other eigenvalues are strictly positive, so $\imorse(\x_r)=0$ for all $r>0$, i.e., $S(r)$ is stable for all $r>0$. As before, $\dim\ker J_r$ coincides with the dimension of the eigenspace of $\Delta_{\g(t)}$ associated to $\lambda^{(0,1)}(t)$, which is $m_{0,1}=N$, by Theorem~\ref{mainthm:A}; so Lemma~\ref{lemma:killingfields} implies that $\x_r$ is $\G$-equivariantly nondegenerate for all $r>0$, hence non-resonant by Theorem~\ref{thm:IFT}.
\end{proof}

\begin{remark}
It was known that a sequence of resonant radii $r_p\nearrow\frac{\pi}{2}$ existed for distance spheres $S(r)$ in $\C P^{n+1}$ and $\Hr P^{n+1}$ centered at any point $x_0$ due to basic eigenvalues for the Riemannian submersion $\Ss^{2d-1}\to S(r)\to \mathrm{Cut}(x_0)$, see~\cite[Ex.~6.1]{bp-imrn}. However, neither their exact location \eqref{eq:rp} nor the fact that \emph{only} basic eigenvalues give rise to such bifurcations was previously known. Moreover, the study of local rigidity and resonance for geodesic spheres in $\Ca P^2$ was also not possible in \cite{bp-imrn} since none of the group normality assumptions $\H\triangleleft\K$ or $\K\triangleleft\G$ are satisfied in this case. 
The fact that it was possible to overcome these difficulties in Theorem~\ref{mainthm:B} might suggest that a different approach, e.g., using Mean Curvature Flow, cf.~\cite[Rem.~2.13]{bp-imrn}, may lead to even more general bifurcation results.
\end{remark}

\end{document}